\numberwithin{equation}{section}
\newtheorem{thm}{Theorem}[section]
\newtheorem{lem}[thm]{Lemma}
\theoremstyle{remark}
\newtheorem{rmk}[thm]{Remark}
\newtheorem{ex}[thm]{Example}
\newcommand{\cE}{{\mathcal E}}
\newcommand{\cF}{{\mathcal F}}
\newcommand{\cG}{{\mathcal G}}
\newcommand{\cH}{{\mathcal H}}
\newcommand{\cL}{{\mathcal L}}
\newcommand{\cO}{{\mathcal O}}
\newcommand{\cP}{{\mathcal P}}
\newcommand{\cU}{{\mathcal U}}
\newcommand{\cV}{\mathcal{V}}
\newcommand{\PP}{{\mathbb P}}
\newcommand{\TT}{{\mathbb T}}
\newcommand{\ZZ}{{\mathbb Z}}
\newcommand{\Frob}{{\mathsf{F}_{*}}}
\newcommand{\Frobr}{{\mathsf{F}^{r}_{*}}}
\newcommand{\Fm}{{\mathsf{F}_{m}}}
\newcommand{\Frobm}{{\mathsf{F}_{m*}}}
\newcommand{\rk}{{\textnormal{rk }}}
\title{Frobenius pushforwards of vector bundles on projective spaces}
\author{Feliks Rączka}
  \address{Institute of Mathematics, Polish Academy of Sciences, ul.\ Śniadeckich 8,
    \newline\indent 00-656 Warsaw, Poland
  }
\email{fraczka@impan.pl}
\date{\today}
\begin{document}

\begin{abstract}
We investigate when the filtration induced by Beilinson's spectral sequence splits non-canonically into a direct sum decomposition. We conclude that for any vector bundle $\cE$ on a projective space over an algebraically closed field of characteristic $p>0$ there exists $r_{0}$ such that for $r\geq r_{0}$ the Frobenius pushforward $\Frobr\cE$ decomposes as a direct sum of line bundles and exterior powers of the cotangent bundle (we also give a variant for the "toric Frobenius map" valid in any characteristic).  As an application we give a short proof of Klyachko's theorem for vanishing of the cohomology of toric vector bundles on projective spaces.
\end{abstract}

\maketitle

\section{Introduction}

Let $X$ be a smooth projective variety over an algebraically closed field $k$ of characteristic $p>0$ and let $\mathsf{F}:X\to X^{(1)}$ be the $k$-linear Frobenius morphism. Let $\cE$ be a vector bundle on $X$. Then its pushforward $\Frob\cE$ is a vector bundle on $X^{(1)}$ and  it is a classical problem to study decomposition of $\Frob\cE$ into a direct sum of indecomposable vector bundles. A well known theorem of R.\ Hartshorne \cite[Corollary 6.4]{Hartshorne} states that if $\cL$ is a line bundle on $\PP^{n}_{k}$ then $\Frob\cL$ is a direct sum of line bundles. This result has been later generalized by J.\ Thomsen \cite{Thomsen} who proved that if $X$ is toric then $\Frob\cL$ is a direct sum of line bundles for any line bundle $\cL$, and P.\ Achinger \cite{Achinger1} showed that the property that the Frobenius pushforward of any line bundle splits as a direct sum of line bundles characterizes smooth projective toric varieties among connected projective $k$-schemes. Frobenius pushforwards of line bundles on quadrics have been studied by A.\ Langer \cite{Langer} and later by P.\ Achinger \cite{Achinger2}. Finally, for vector bundles of higher ranks B\o gvad \cite{Bogvad} proved that if $X$ is toric and $\cE$ is a toric bundle then for any $r\geq1$ there exists a decomposition $\Frobr\cE=\bigoplus_{\lambda}\cE_{\lambda}$ where $\rk\cE_{\lambda}=\rk\cE.$\newline

In this paper we study Frobenius pushforwards of arbitrary (i.e., of arbitrary finite rank and not necessarily toric) vector bundles on projective spaces. If $k$ is of arbitrary characteristic and $m$ is a positive integer then we have a well defined \textit{Frobenius morphism}
\[
\mathsf{F}_{m}:\PP^{n}_{k}\to\PP^{n}_{k}\; ;\;
[x_{0}:\dots:x_n]\mapsto[x_{0}^{m}:\dots:x_{n}^{m}]
\]
If $\textnormal{char }k=p$ and $m=p^{r}$ then $\Fm=\mathsf{F}^{r}$. If $m$ is coprime to the characteristic, then $\Fm$ is a finite ramified cover and if we denote by $\mu_m\subset k^{\times}$ the group of $m$-th roots of unity then $\Fm$ corresponds to the quotient of $\PP^{n}_{k}$ by the diagonal action of $\mu_{m}^{n}\subset\textnormal{PGL}_{n+1}(k)$. In any case $\Fm$ is a finite surjective morphism of smooth projective varieties and therefore it is flat. In particular if $\cE$ is a vector bundle then so is its pushforward $\Frobm\cE$ and the problem of decomposing it into a direct sum of indecomposable vector bundles is meaningful.

Recall that by Serre's vanishing theorem combined with Serre's duality, for any vector bundle $\cE$ there exists an integer $m(\cE)\geq0$ such that $\cE(m)$ has only non-zero cohomology in degree $0$ for $m\geq m(\cE)$ and has only non-zero cohomology in degree $n$ for $m\leq -m(\cE)$. The main theorem of this paper is the following.

\begin{thm}\label{MainThm}
Let $k$ be an algebraically closed field of arbitrary characteristic and let $\cE$ be a vector bundle on $\PP^{n}_{k}$. Then for $m\geq m(\cE)$ there exists a decomposition
\begin{equation}\label{Direct1}
\Frobm\cE=\bigoplus_{i=0}^{n}(\Omega^{i}_{\PP^{n}_{k}})^{\oplus a_{i}}\oplus\bigoplus_{j=1}^{n}\cO_{\PP^{n}_{k}}(-j)^{\oplus b_{j,m}}
\end{equation}
Moreover, $a_{i}=\dim_{k}H^{i}(\PP^{n}_{k},\cE).$

\end{thm}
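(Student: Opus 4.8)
The plan is to reduce the statement to a splitting criterion for the Beilinson filtration (the technical heart of the paper) by computing the full cohomology table of $\Frobm\cE$, checking that the hypothesis $m\geq m(\cE)$ forces this table into the ``tame'' shape to which the criterion applies, and then reading off the multiplicities from the cohomology of the building blocks. Since $\Fm$ is finite flat, $\Frobm\cE$ is locally free, so the decomposition problem is meaningful.

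First I would record the cohomology of all twists of $\Frobm\cE$. As $\Fm$ is finite (hence affine) and flat, the projection formula gives $\Frobm\cE\otimes\cO(l)\cong\Frobm\bigl(\cE\otimes\Fm^{*}\cO(l)\bigr)=\Frobm\bigl(\cE(ml)\bigr)$, and finiteness of $\Fm$ yields $H^{q}\bigl(\PP^{n}_{k},(\Frobm\cE)(l)\bigr)=H^{q}\bigl(\PP^{n}_{k},\cE(ml)\bigr)$ for all $q$ and $l$. Now I invoke $m\geq m(\cE)$: for $l\geq1$ one has $ml\geq m\geq m(\cE)$, so $\cE(ml)$ has cohomology only in degree $0$; for $l\leq-1$ one has $ml\leq-m\leq-m(\cE)$, so $\cE(ml)$ has cohomology only in degree $n$; and for $l=0$ one simply gets $H^{q}(\cE)$. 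Hence
\[
H^{q}\bigl(\PP^{n}_{k},(\Frobm\cE)(l)\bigr)=0\quad\text{whenever}\quad (l>0,\, q>0)\ \text{or}\ (l<0,\, q<n),
\]
which is precisely the tameness condition: outside the central column $l=0$, all cohomology of $\Frobm\cE$ is concentrated in degree $0$ (to the right) or degree $n$ (to the left).

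Next I would apply the splitting of the Beilinson filtration to the tame bundle $\cF:=\Frobm\cE$, obtaining a non-canonical decomposition of $\cF$ into line bundles and twists $\Omega^{i}(c)$ of exterior powers of the cotangent bundle. It then remains to determine which twists can occur, and here I would use Bott's formula together with the tameness just established. A line bundle $\cO(d)$ has a tame cohomology table exactly when $-n-1\leq d\leq0$: the constraint $l\geq1$ forbids $H^{n}$, forcing $d\geq-n-1$, while $l\leq-1$ forbids $H^{0}$, forcing $d\leq0$. A genuine $\Omega^{i}(c)$ with $1\leq i\leq n-1$ is tame only for $c=0$, since its unique ``middle'' cohomology $H^{i}(\Omega^{i}(c))$ lives in the column $l=-c$ and, being in a degree $q$ with $0<q<n$, must lie in the column $l=0$. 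Recalling $\Omega^{0}=\cO$ and $\Omega^{n}=\cO(-n-1)$, the admissible summands are precisely $\Omega^{i}$ for $0\leq i\leq n$ and $\cO(-j)$ for $1\leq j\leq n$, giving the stated form \eqref{Direct1}.

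Finally I would pin down the multiplicities. The line bundles $\cO(-j)$ with $1\leq j\leq n$ have no cohomology, while by Bott $H^{q}(\Omega^{i})=k$ for $q=i$ and $0$ otherwise; taking $l=0$ in the decomposition therefore gives $\dim_{k}H^{i}(\PP^{n}_{k},\Frobm\cE)=a_{i}$. Comparing with the $l=0$ case of the cohomology computation, $H^{i}(\PP^{n}_{k},\Frobm\cE)=H^{i}(\PP^{n}_{k},\cE)$, yields $a_{i}=\dim_{k}H^{i}(\PP^{n}_{k},\cE)$; the remaining $b_{j,m}$ are then fixed by comparing ranks. The main obstacle is the splitting step itself—upgrading the Beilinson filtration of a tame bundle to an actual direct sum, i.e.\ showing the relevant connecting maps in the spectral sequence vanish—which is exactly the phenomenon studied earlier; by contrast the cohomology computation and the Bott-formula bookkeeping are routine. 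I expect the same argument to apply verbatim in the case $\gcd(m,p)=1$ (the toric Frobenius), since it uses only that $\Fm$ is finite flat with $\Fm^{*}\cO(1)=\cO(m)$.
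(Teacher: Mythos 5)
Your proposal is correct and follows essentially the same route as the paper: compute the cohomology table of $\Frobm\cE$ via the projection formula, observe that $m\geq m(\cE)$ confines the middle cohomology to the column $l=0$ (which is exactly condition $(\dagger)$, so Theorem \ref{Thm2} applies), and then use Bott's formula and the vanishing in the columns $l=\pm1$ to rule out all twists other than $\Omega^{i}$ for $0\leq i\leq n$ and $\cO(-j)$ for $1\leq j\leq n$ and to identify the multiplicities $a_{i}=h^{i}(\PP^{n}_{k},\cE)$.
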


A well known theorem of Klyachko \cite[Corollary 4.2]{Klyachko} states that if $\cE$ is a toric bundle on $\PP^{n}_{k}$ and $\rk\cE<{n \choose i}$ then $H^{i}(\PP^{n}_{k},\cE)=0$. As an application of Theorem \ref{MainThm} we give a new short proof of the theorem of Klyachko.\newline

Let us briefly explain the idea behind the proof of Theorem \ref{MainThm}. If $\cE$ is a line bundle then this follows from the theorem of Hartshorne. One classical proof of this theorem follows from Horrocks' criterion \cite[Theorem 2.3.1]{Okonek}, which states that a vector bundle $\cV$ on $\PP^{n}_{k}$ is a direct sum of line bundles if and only if $H^{i}(\PP^{n}_{k},\cV(j))=0$ for all $0<i<n$ and $j\in\ZZ.$ Since the Frobenius morphism is affine and $\mathsf{F}_{m}^{*}\cO_{\PP^{n}_{k}}(j)=\cO_{\PP^{n}_{k}}(mj)$, the projection formula implies that if $\cE$ is a line bundle, then for $0<i<n$
\[
H^{i}(\PP^{n},(\Frobm\cE)(j))=H^{i}(\PP^{n},\Frobm\cE(mj))=H^{i}(\PP^{n},\cE(mj))=0
\]
and therefore $\Frobm\cE$ is a direct sum of line bundles. We plan to use a similar approach. First we find a cohomological criterion for a vector bundle $\cE$ to decompose as a direct sum 
\begin{equation}\label{Direct2}
\cE=\bigoplus_{\substack{1\leq s\leq n-1\\r\in\ZZ}}\Omega_{\PP^{n}_{k}}^{s}(-r)^{\oplus a_{r,s}}\oplus\bigoplus_{k\in\ZZ}\cO_{\PP^{n}_{k}}(k)^{\oplus b_{k}}
\end{equation}  
and then we show that this criterion is always satisfied by $\Frobm\cE$ for $m\geq m(\cE).$ Here is the precise statement. For a vector bundle $\cE$ we write $h^{i}(\PP^{n}_{k},\cE)=\dim_{k}H^{i}(\PP^{n}_{k},\cE)$. We set
\[
\cH(\cE)=\{(r,s)\in\ZZ\times\{1,\dots,n-1\}:h^{s}(\PP^{n},\cE(r))\neq0\}.
\]
Consider the following property of a vector bundle $\cE:$
\begin{equation}\tag{$\dagger$}
\textit{if $(r,s)\in\cH(\cE)$ and $t>0$ then $(r+t,s-t+1)\notin\cH(\cE)$}
\end{equation}
Our cohomological criterion is the following.

\begin{thm}\label{Thm2}
Let $k$ be an algebraically closed field of arbitrary characteristic and let $\cE$ be a vector bundle on $\PP^{n}_{k}$ that satisfies condition $(\dagger)$. Then $\cE$ decomposes into a direct sum as in the equation (\ref{Direct2}), with $a_{r,s}=h^{s}(\PP^{n}_{k},\cE(r)).$
\end{thm}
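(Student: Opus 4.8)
The plan is to carve out the ``cotangent part'' of $\cE$ by cohomology and to recognise the remainder through Horrocks' criterion. Since $\cH(\cE)$ is finite (by the Serre vanishing recalled before Theorem~\ref{MainThm}), the candidate summand
\[
\cA=\bigoplus_{(r,s)\in\cH(\cE)}\bigl(\Omega^{s}_{\PP^{n}_{k}}(-r)\bigr)^{\oplus h^{s}(\cE(r))}
\]
is a genuine finite direct sum. The first task is to produce enough morphisms between these blocks and $\cE$. Recall the canonical generator $\epsilon_{s}\in\operatorname{Ext}^{s}(\cO,\Omega^{s})=H^{s}(\PP^{n}_{k},\Omega^{s})\cong k$, the $s$-fold Yoneda power of the Euler class; splicing the Euler sequences $0\to\Omega^{j}\to\cO(-j)^{\oplus\binom{n+1}{j}}\to\Omega^{j-1}\to0$ represents $\epsilon_{s}$ by an explicit exact complex $0\to\Omega^{s}(-r)\to\cO(-s-r)^{\binom{n+1}{s}}\to\cdots\to\cO(-1-r)^{n+1}\to\cO(-r)\to0$ of line bundles after twisting by $-r$. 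Yoneda composition with this class gives
\[
\Psi_{r,s}\colon\operatorname{Hom}(\Omega^{s}(-r),\cE)\longrightarrow\operatorname{Ext}^{s}(\cO(-r),\cE)=H^{s}(\cE(r)),\qquad \phi\mapsto\phi\circ\epsilon_{s}.
\]
Feeding the splicing complex into $\operatorname{Hom}(-,\cE)$ yields a hypercohomology spectral sequence in which the only obstructions to surjectivity of $\Psi_{r,s}$ are the groups $H^{s-t+1}(\cE(r+t))$ for $1\le t\le s$; all of their degrees lie in $\{1,\dots,n-1\}$, so by property $(\dagger)$ applied to $(r,s)$ they vanish. Hence $(\dagger)$ forces every $\Psi_{r,s}$ to be surjective, and a basis of $H^{s}(\cE(r))$ lifts to maps $\Omega^{s}(-r)\to\cE$.

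Assembling these lifts over all $(r,s)\in\cH(\cE)$ produces $\Phi\colon\cA\to\cE$. Dually, I would note that Serre duality turns $(\dagger)$ into itself: the correspondence $(r,s)\in\cH(\cE)\Leftrightarrow(-r-n-1,\,n-s)\in\cH(\cE^{\vee})$ carries condition $(\dagger)$ for $\cE$ to condition $(\dagger)$ for $\cE^{\vee}$, because the forbidden configuration is symmetric under exchanging the two points. Applying the same construction to $\cE^{\vee}$ (or directly dualising $\Psi_{r,s}$) gives a second family of maps assembling into $\Phi'\colon\cE\to\cA$. The heart of the argument is then to show that $\Phi'\circ\Phi\in\operatorname{End}(\cA)$ is an automorphism. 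Its diagonal blocks, on each summand $\Omega^{s}(-r)^{\oplus h^{s}(\cE(r))}$, compute the Serre duality pairing of the chosen basis of $H^{s}(\cE(r))$ against the dual basis used to build $\Phi'$, and $\operatorname{End}(\Omega^{s})=k$ since $\Omega^{s}$ is stable, so these blocks are invertible scalar matrices. The off-diagonal blocks live in $\operatorname{Hom}(\Omega^{s}(-r),\Omega^{s'}(-r'))=H^{0}(\PP^{n}_{k},\Omega^{n-s}(n+1)\otimes\Omega^{s'}(r-r'))$, which by Bott's formula and the semiorthogonality of the blocks can be arranged strictly triangular for a suitable ordering of the finite set $\cH(\cE)$. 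Thus $\Phi'\circ\Phi$ is an invertible diagonal times a unipotent, hence an isomorphism, and $\Phi$ exhibits $\cA$ as a direct summand: $\cE\cong\cA\oplus\cQ$ with $\cQ=\ker\Phi'$ locally free.

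It remains to identify $\cQ$. By construction $\Phi$ induces an isomorphism on every middle cohomology group $H^{s}(\,\cdot\,(r))$ with $0<s<n$: Bott's formula shows that $\Omega^{s'}(-r')$ contributes to the middle cohomology only at the single point $(r',s')$ and there one-dimensionally, so the multiplicities were chosen exactly to match. Consequently $\cQ$ has $H^{i}(\cQ(j))=0$ for all $0<i<n$ and all $j\in\ZZ$, whence Horrocks' criterion \cite[Theorem~2.3.1]{Okonek} gives $\cQ=\bigoplus_{k}\cO(k)^{\oplus b_{k}}$. Combining this with the decomposition of $\cA$ yields the asserted form \eqref{Direct2}; the multiplicity statement $a_{r,s}=h^{s}(\cE(r))$ follows by reading Bott's formula on the decomposed bundle, since line bundles contribute nothing in middle degrees and distinct blocks $\Omega^{s'}(-r')$ do not interfere.

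The step I expect to be the genuine obstacle is the splitting, i.e.\ proving that $\Phi'\circ\Phi$ is an automorphism; this is exactly the non-canonical splitting advertised in the abstract. Surjectivity of the Yoneda maps (where $(\dagger)$ is used directly) reduces to the obstruction computation above and is essentially routine once the Euler-sequence resolution of $\epsilon_{s}$ and Bott's formula are in hand. The delicate point is controlling all of the groups $\operatorname{Hom}(\Omega^{s}(-r),\Omega^{s'}(-r'))$ among the appearing blocks well enough to guarantee the required triangularity; I would expect the careful bookkeeping of these Hom-spaces, together with the verification that no off-diagonal contribution can destroy invertibility of the composite, to be where the real work lies.
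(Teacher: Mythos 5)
Your route is genuinely different from the paper's. The paper never constructs explicit morphisms: it runs Beilinson's spectral sequence for $\cE$, shows that $(\dagger)$ kills all differentials into and out of the middle rows (so $E^{-s,s}_{\infty}=E^{-s,s}_{1}$ for $1\leq s\leq n-1$), proves $\textnormal{Ext}^{1}$-vanishing between the graded pieces of the Beilinson filtration (the delicate term being $E^{0,0}_{\infty}$, handled by a d\'evissage through the pages), and then splits off one summand $\Omega^{d}(d)^{\oplus h^{d}(\cE(-d))}$ at a time by induction on $\#\cH(\cE)$, after twisting so that $\cH(\cE)$ touches the diagonal $r+s=0$. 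Your first step is correct and is in effect an explicit form of what the spectral sequence does implicitly: the obstruction bookkeeping for the surjectivity of $\Psi_{r,s}$ is right (the obstructions $H^{s-t+1}(\cE(r+t))$, $1\leq t\leq s$, all have cohomological degree in $\{1,\dots,n-1\}$ and vanish by $(\dagger)$), and your observation that $(\dagger)$ is self-dual under Serre duality is also correct: if $(r,s)\in\cH(\cE)$ and $(r-t,s+t-1)\in\cH(\cE)$, applying $(\dagger)$ at the latter point with the same $t$ recovers $(r,s)\notin\cH(\cE)$, a contradiction. The identification of the complement via Horrocks' criterion at the end is likewise fine.

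The gap is exactly where you locate it: the invertibility of $\Phi'\circ\Phi$, and the justification you sketch for it does not work as stated. ``Semiorthogonality of the blocks'' refers to the exceptional collection $\Omega^{a}(a)$, i.e.\ to $\textnormal{Hom}(\Omega^{a}(a),\Omega^{b}(b))=0$ for $a<b$; your blocks are $\Omega^{s}(-r)$ with arbitrary twists, and for these the groups $\textnormal{Hom}(\Omega^{s}(-r),\Omega^{s'}(-r'))=H^{0}(\bigwedge^{s}\cT\otimes\Omega^{s'}(r-r'))$ are not computed by Bott's formula and are frequently nonzero in both directions of your intended order. What actually rescues the triangularity is the \emph{stability} of $\Omega^{s}$: a nonzero map between stable bundles forces $\mu(\Omega^{s}(-r))\leq\mu(\Omega^{s'}(-r'))$, with equality only if the map is an isomorphism, and Bott's formula (applied to $H^{s'}(\Omega^{s}(r'-r))$) shows $\Omega^{s}(-r)\cong\Omega^{s'}(-r')$ forces $(r,s)=(r',s')$. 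Ordering the blocks by slope $-r-\tfrac{(n+1)s}{n}$ then makes the off-diagonal part strictly triangular, and the diagonal blocks are the duality pairings you describe, so $\Phi'\circ\Phi$ is indeed an automorphism. So your proof can be completed, but only after importing stability of $\Omega^{s}$ (or some equivalent Hom-vanishing), an ingredient absent from your proposal and from the paper, whose argument instead gets by with Beilinson's $\textnormal{Ext}$-vanishing $\textnormal{Ext}^{i}(\Omega^{r}(r),\Omega^{s}(s))=0$ for the normalized twists only.
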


The proof of Theorem \ref{Thm2} follows from a careful analysis of Beilinson's spectral sequence. Although condition $(\dagger)$ does not seem very natural it should be seen as a generalization of Horrocks' criterion. Indeed, if $\cH(\cE)=\emptyset$ then condition $(\dagger)$ is trivially satisfied and in the decomposition (\ref{Direct2}) we must have $a_{i,j}=0$ for all $i,j$. Therefore $\cE$ splits as a direct sum of line bundles.
 
\subsection*{Acknowledgements}
This work was supported by the project KAPIBARA funded by the European Research Council (ERC) under the European Union's Horizon 2020 research and innovation programme (grant agreement No 802787). The author would like to thank P.\ Achinger, A.\ Langer, C.\ Raicu, K. VandeBogert and J.\ A.\ Wiśniewski for the interesting discussions and for their helpful comments. In particular we thank P.\ Achinger for his help with Lemma \ref{ToricLemma}.

\section{Preliminaries}

In this section we fix the notation and recall some classical theorems concerning vector bundles on projective spaces.\newline

From now on we fix an algebraically closed field $k$. We will write $\PP^{n}=\PP^{n}_{k}$ and we will write $\mathsf{F}_{m}:\PP^{n}\to\PP^{n}$ for the morphism that raises projective coordinates to the $m$-th power. We will also denote $\Omega^{i}=\Omega^{i}_{\PP^{n}_{k}}=\bigwedge^{i}\Omega_{\PP^{n}_{k}}.$ By convention $\Omega^{r}(r)$ is understood to be zero if $r\notin[0,n].$

\begin{thm}[{Bott's Formula, \cite[p.4]{Okonek}}]\label{Bott} 
Let $1\leq q \leq n-1$. Then

\begin{equation}
h^{q}(\PP^{n},\Omega^{i}(j))=
\begin{cases}
1 &\textnormal{ if } $i=q$ \textnormal{ and } $j=0$,\\
0 &\textnormal{otherwise}.
\end{cases}
\end{equation}

\end{thm}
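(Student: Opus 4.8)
The plan is to deduce Bott's formula in the stated range $1\le q\le n-1$ purely from the cohomology of line bundles, by propagating it through the Euler sequence and its exterior powers. The one classical input I would assume is Serre's computation of line-bundle cohomology on $\PP^{n}$, in particular the vanishing $H^{q}(\PP^{n},\cO(d))=0$ for all $1\le q\le n-1$ and all $d\in\ZZ$, together with the identification of $H^{0}(\PP^{n},\cO(d))$ with the space of homogeneous polynomials of degree $d$. Throughout I assume $n\ge2$, the statement being vacuous otherwise.

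First I would set up the fundamental exact sequences. Dualizing the Euler sequence and taking exterior powers of the locally free middle term produces, for each $1\le p\le n$, a short exact sequence
\[
0\to\Omega^{p}\to\cO(-p)^{\oplus\binom{n+1}{p}}\to\Omega^{p-1}\to0,
\]
coming from the two-step Koszul filtration of $\bigwedge^{p}\big(\cO(-1)^{\oplus(n+1)}\big)$. Twisting by $\cO(j)$ and taking the long exact sequence in cohomology, the vanishing of the middle cohomology of line bundles forces, for every $j\in\ZZ$, the shift isomorphism
\[
H^{q}(\PP^{n},\Omega^{p}(j))\;\cong\;H^{q-1}(\PP^{n},\Omega^{p-1}(j))\qquad(2\le q\le n-1),
\]
because both flanking terms $H^{q}(\cO(j-p))$ and $H^{q-1}(\cO(j-p))$ vanish. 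Equivalently, $H^{q}(\Omega^{p}(j))\cong H^{q+1}(\Omega^{p+1}(j))$ for $1\le q\le n-2$, so the integer $s:=p-q$ is preserved along the resulting chain of isomorphisms.

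Next I would case on $s$, using the convention $\Omega^{p}=0$ for $p\notin[0,n]$. If $s=p-q\ge1$, I shift upward until the form-degree reaches $n$; the chain stays valid (each intermediate cohomological degree is $\le n-2$) and terminates at $H^{n-s}(\PP^{n},\Omega^{n}(j))=H^{n-s}(\PP^{n},\cO(j-n-1))$, whose degree satisfies $1\le n-s\le n-1$ and which therefore vanishes. Symmetrically, if $s\le-1$, I shift downward until the form-degree reaches $0$, landing on $H^{q-p}(\PP^{n},\cO(j))$ with $1\le q-p\le n-1$, again zero. In both off-diagonal cases $p\ne q$ and the cohomology vanishes, as required. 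On the diagonal $s=0$, i.e.\ $p=q$, the chain cannot reach a line bundle (it terminates at the corners $H^{1}(\Omega^{1}(j))$ and $H^{n-1}(\Omega^{n-1}(j))$), and shifting down gives the single reduction $H^{q}(\Omega^{q}(j))\cong H^{1}(\PP^{n},\Omega^{1}(j))$.

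It then remains to compute $H^{1}(\PP^{n},\Omega^{1}(j))$ directly, which I regard as the heart of the matter. Applying the long exact sequence to the twisted Euler sequence $0\to\Omega^{1}(j)\to\cO(j-1)^{\oplus(n+1)}\to\cO(j)\to0$ and again using $H^{1}(\cO(j-1))=0$, one gets $H^{1}(\Omega^{1}(j))=\coker\big(H^{0}(\cO(j-1))^{\oplus(n+1)}\xrightarrow{\mu}H^{0}(\cO(j))\big)$, where $\mu$ is the multiplication map $(g_{0},\dots,g_{n})\mapsto\sum_{i}x_{i}g_{i}$. For $j\ge1$ every monomial of degree $j$ is divisible by some $x_{i}$, so $\mu$ is surjective and the cokernel vanishes; for $j\le-1$ the target already vanishes; and for $j=0$ the source is zero while the target is one-dimensional, giving $\coker\mu\cong k$. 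Hence $H^{1}(\Omega^{1}(j))$ is one-dimensional exactly when $j=0$ and zero otherwise, which combined with the reductions above yields the asserted values $h^{q}(\Omega^{i}(j))=1$ for $(i,j)=(q,0)$ and $0$ in all other cases. The main obstacle is organizational rather than computational: one must choose the direction of the shift according to the sign of $s=p-q$ and verify that the degree ranges in the shift isomorphisms are exactly wide enough to reach $\Omega^{0}$ or $\Omega^{n}$ off the diagonal, while on the diagonal the isomorphisms alone never suffice and the explicit Euler-sequence computation of $H^{1}(\Omega^{1})$ is indispensable.
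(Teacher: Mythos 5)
Your proof is correct. Note, though, that the paper itself gives no proof of this statement: Bott's formula is quoted as a known classical result with a citation to Okonek--Schneider--Spindler, so there is no internal argument to compare yours against. What you have written is the standard self-contained derivation, and it is well adapted to the restricted range $1\leq q\leq n-1$ actually stated here: since only the middle cohomology is claimed, you never need the full Bott table (the binomial formulas for $h^{0}$ and $h^{n}$ of $\Omega^{i}(j)$), and the whole computation reduces, via the Koszul-type sequences $0\to\Omega^{p}\to\cO(-p)^{\oplus\binom{n+1}{p}}\to\Omega^{p-1}\to0$ and Serre's vanishing $H^{q}(\PP^{n},\cO(d))=0$ for $1\leq q\leq n-1$, to the single cokernel computation for $H^{1}(\Omega^{1}(j))$. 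Your bookkeeping of the shift isomorphisms is accurate: the degree constraints $2\leq q\leq n-1$ (downward) and $1\leq q\leq n-2$ (upward) are exactly what forces off-diagonal classes ($p\neq q$) to propagate all the way to $\Omega^{0}=\cO$ or $\Omega^{n}=\cO(-n-1)$ and die, while diagonal classes stop at $H^{1}(\Omega^{1}(j))$, which you compute directly from the Euler sequence. The argument is also characteristic-free, which matters since the paper works over a field of arbitrary characteristic. The one gain of the paper's approach is brevity by outsourcing; the gain of yours is that the proof is elementary, complete, and uses nothing beyond line-bundle cohomology.
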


\begin{lem}[{Beilinson, \cite[Lemma 2]{Beilinson}}]\label{BeiLemma}
We have $\textnormal{Ext}^{i}(\Omega^{r}(r),\Omega^{s}(s))=0$ for all $i>0$ and all  $r,s$.
\end{lem}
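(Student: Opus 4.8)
The plan is to reduce the computation of $\textnormal{Ext}^{i}(\Omega^{r}(r),\Omega^{s}(s))$ to a single sheaf cohomology group controlled by Bott's formula, by peeling off the first argument using the exterior powers of the Euler sequence. By the convention $\Omega^{r}(r)=0$ for $r\notin[0,n]$ we may assume $0\le r,s\le n$. Taking $r$-th exterior powers of the Euler sequence $0\to\Omega^{1}(1)\to\cO^{\oplus(n+1)}\to\cO(1)\to0$ (a construction valid in every characteristic, since the quotient is a line bundle and the associated filtration of the exterior power has only two steps) yields short exact sequences
\[
0\to\Omega^{r}(r)\to\cO^{\oplus\binom{n+1}{r}}\to\Omega^{r-1}(r)\to0,
\]
and more generally, after twisting by $\cO(m)$,
\[
0\to\Omega^{r-m}(r)\to\cO(m)^{\oplus\binom{n+1}{r-m}}\to\Omega^{r-m-1}(r)\to0.
\]

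First I would treat the base case $r=0$, where $\Omega^{0}(0)=\cO$ and $\textnormal{Ext}^{i}(\cO,\Omega^{s}(s))=H^{i}(\PP^{n},\Omega^{s}(s))$ vanishes for $i>0$ directly by Theorem \ref{Bott}, the twist $s$ never meeting the diagonal case $j=0$ unless $s=0$, where the bundle is simply $\cO$. For $r\ge1$ I would apply $\textnormal{Hom}(-,\Omega^{s}(s))$ to the twisted sequences above and read off the long exact sequence of $\textnormal{Ext}$ groups. The middle terms are $\textnormal{Ext}^{\bullet}(\cO(m),\Omega^{s}(s))=H^{\bullet}(\PP^{n},\Omega^{s}(s-m))$, so whenever these vanish in two consecutive degrees the long exact sequence produces an isomorphism $\textnormal{Ext}^{i+m}(\Omega^{r-m}(r),\Omega^{s}(s))\cong\textnormal{Ext}^{i+m+1}(\Omega^{r-m-1}(r),\Omega^{s}(s))$. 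Iterating from $m=0$ to $m=r-1$ collapses the first argument to $\Omega^{0}(r)=\cO(r)$ and raises the cohomological degree by $r$, giving
\[
\textnormal{Ext}^{i}(\Omega^{r}(r),\Omega^{s}(s))\cong H^{i+r}(\PP^{n},\Omega^{s}(s-r)).
\]
For $i\ge1$ we have $i+r\ge1$, and Bott's formula shows the right-hand side vanishes: in the interior range the only higher cohomology of $\Omega^{s}(s-r)$ occurs on the Bott diagonal $s-r=0$, $q=s$, which would force $i+r=s=r$, that is $i=0$; the boundary degree $q=n$ is handled as below.

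The main obstacle is the bookkeeping needed to verify the vanishing of all the intermediate middle terms simultaneously. Concretely, the groups that must vanish are $H^{i+m}(\PP^{n},\Omega^{s}(s-m))$ and $H^{i+m+1}(\PP^{n},\Omega^{s}(s-m))$ for $0\le m\le r-1$, all lying in degrees $\ge i\ge1$. By Bott's formula the diagonal contribution $H^{q}(\PP^{n},\Omega^{q})=k$ can only appear when $s-m=0$ and $q=s$; but in that case the two degrees $i+m=i+s$ and $i+m+1$ are strictly larger than $s$, so no obstruction arises. The remaining subtlety is the top degree $q=n$, which falls outside the range of Theorem \ref{Bott}; here I would invoke Serre duality to rewrite $H^{n}(\PP^{n},\Omega^{s}(t))\cong H^{0}(\PP^{n},\Omega^{n-s}(-t))^{\vee}$ and check, again from the degree-zero case of Bott's formula, that the relevant twists $t=s-m$ with $m\le r\le n$ never produce global sections except in the excluded situation $i=0$. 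Assembling these vanishings along the chain of isomorphisms then completes the proof.
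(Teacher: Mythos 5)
The paper does not prove this lemma at all --- it is quoted directly from Beilinson's paper as a black box --- so there is no internal proof to compare against; what you have written is a correct, self-contained replacement for that citation. Your dimension-shifting argument checks out: the two-step filtration on $\bigwedge^{r}$ of the twisted Euler sequence does give the short exact sequences $0\to\Omega^{r-m}(r)\to\cO(m)^{\oplus\binom{n+1}{r-m}}\to\Omega^{r-m-1}(r)\to0$ in any characteristic, the middle terms $\textnormal{Ext}^{\bullet}(\cO(m),\Omega^{s}(s))=H^{\bullet}(\PP^{n},\Omega^{s}(s-m))$ are only ever needed in degrees $q\geq i+m\geq m+1$, which safely avoids the Bott diagonal $q=s$, $s-m=0$, and your Serre-duality treatment of the top degree $q=n$ (where the stated version of Theorem \ref{Bott} is silent) correctly reduces to the vanishing of $H^{0}(\PP^{n},\Omega^{n-s}(m-s))$ for $m\leq n$. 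The net identification $\textnormal{Ext}^{i}(\Omega^{r}(r),\Omega^{s}(s))\cong H^{i+r}(\PP^{n},\Omega^{s}(s-r))$ is exactly the standard route (and essentially Beilinson's own, which resolves $\Omega^{r}(r)$ by the Koszul complex of the Euler sequence); the only point to be explicit about in a final write-up is that you are also implicitly using the degree-$0$ and degree-$n$ parts of Bott's formula, which the paper's Theorem \ref{Bott} does not state, so you should either cite the full Bott formula or include the elementary computation of $H^{0}(\PP^{n},\Omega^{a}(b))$ that your Serre-duality step relies on.
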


\begin{thm}[{Beilinson's spectral sequence, \cite[Proposition 8.28]{Huybrechts}}]\label{Bei}
Let $\cE$ be a vector bundle on $\PP^{n}.$ Then there is a spectral sequence of vector bundles $E^{r,s}_{t}$ on $\PP^{n}$ such that

\begin{enumerate} 

\item[i)] $E^{r,s}_{1}=H^{s}(\PP^{n},\cE(r))\otimes\Omega^{-r}(-r)$,

\item[ii)] $E_{\infty}^{r,s}=0$ for $r\neq-s$,

\item[iii)] $\cE$ has a filtration $0=\cF^{1}\subset \cF^{0}\subset\dots\subset \cF^{-n}=\cE$ where $\cF^{i}/\cF^{i+1}=E_{\infty}^{i,-i}$.

\end{enumerate}
\end{thm}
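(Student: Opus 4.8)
The plan is to obtain the spectral sequence from Beilinson's resolution of the diagonal on $\PP^n \times \PP^n$. Write $p, q \colon \PP^n \times \PP^n \to \PP^n$ for the two projections and $\Delta \subset \PP^n \times \PP^n$ for the diagonal, and realize $\PP^n = \PP(V)$ with $\dim_k V = n+1$. Consider the rank-$n$ bundle $\cB := p^*\cO(1) \otimes q^* T_{\PP^n}(-1)$. Using the Euler sequence to compute $H^0(\PP^n, T_{\PP^n}(-1)) \cong V$ and $H^0(\PP^n, \cO(1)) \cong V^*$, one gets $H^0(\PP^n \times \PP^n, \cB) \cong V^* \otimes V = \mathrm{End}(V)$, and I would take the canonical section $s$ corresponding to $\mathrm{id}_V$. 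The decisive geometric input is that $s$ vanishes scheme-theoretically exactly along $\Delta$, which has codimension $n$ equal to $\rk \cB$, so that $s$ is a \emph{regular} section. Its Koszul complex is then a locally free resolution
\begin{equation*}
0 \to p^*\cO(-n) \otimes q^*\Omega^{n}(n) \to \cdots \to p^*\cO(-1) \otimes q^*\Omega^{1}(1) \to \cO_{\PP^n \times \PP^n} \to \cO_\Delta \to 0,
\end{equation*}
because $\bigwedge^{i} \cB^\vee = \bigwedge^{i}\big(p^*\cO(-1) \otimes q^*\Omega^1(1)\big) = p^*\cO(-i) \otimes q^*\Omega^{i}(i)$. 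I denote the resolving complex by $\cK^\bullet$, placed in cohomological degrees $-n, \dots, 0$, so that $\cK^{r} = p^*\cO(r) \otimes q^*\Omega^{-r}(-r)$.

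Next I would recover $\cE$ from $\cO_\Delta$. Since $q|_\Delta$ is an isomorphism onto $\PP^n$ identifying $(p^*\cE)|_\Delta$ with $\cE$, the projection formula gives $Rq_*(p^*\cE \otimes \cO_\Delta) \cong \cE$, concentrated in degree $0$. Tensoring the resolution $\cK^\bullet$ by the locally free sheaf $p^*\cE$ preserves exactness, so $p^*\cE \otimes \cK^\bullet$ is quasi-isomorphic to $p^*\cE \otimes \cO_\Delta$; applying $Rq_*$ therefore yields a complex quasi-isomorphic to $\cE$ in degree $0$. The asserted spectral sequence is the hypercohomology spectral sequence of $Rq_*$ applied to $p^*\cE \otimes \cK^\bullet$, filtered by the $\cK$-degree, so that
\begin{equation*}
E_1^{r,s} = R^{s} q_*\big(p^*\cE \otimes \cK^{r}\big) = R^{s} q_*\big(p^*(\cE(r))\big) \otimes \Omega^{-r}(-r),
\end{equation*}
the last equality being the projection formula applied to the factor $q^*\Omega^{-r}(-r)$.

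To finish I would identify the $E_1$ terms and read off the abutment. By flat base change along $q$ (all fibres are copies of $\PP^n$ and the cohomology dimensions of $\cE(r)$ are constant), $R^{s} q_*\big(p^*(\cE(r))\big)$ is the free sheaf $H^{s}(\PP^n, \cE(r)) \otimes \cO_{\PP^n}$, whence $E_1^{r,s} = H^{s}(\PP^n, \cE(r)) \otimes \Omega^{-r}(-r)$, which is assertion (i). Because the total complex computes $\cE$ placed in degree $0$, the spectral sequence converges to $\cE$ in total degree $r+s = 0$ and to $0$ in every other total degree; this is assertion (ii), namely $E_\infty^{r,s} = 0$ for $r \neq -s$. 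Assertion (iii) is then the standard description of the abutment: $\cE$ inherits the decreasing filtration $0 = \cF^{1} \subset \cF^{0} \subset \cdots \subset \cF^{-n} = \cE$ induced by the $\cK$-degree, with successive quotients $\cF^{i}/\cF^{i+1} = E_\infty^{i,-i}$.

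The main obstacle is the geometric content of the first step: verifying that the canonical section $s$ is regular with zero scheme exactly $\Delta$, since this is what makes the Koszul complex a resolution and is the only genuinely non-formal ingredient. I would check it by a local computation on a chart $U \times U \subset \PP^n \times \PP^n$, where, after a local trivialization of $\cB$, the section $s$ becomes the tuple of differences of the affine coordinates of the two factors; these $n$ functions cut out $\Delta \cap (U \times U)$ and manifestly form a regular sequence. Once the resolution is established, the remainder is formal manipulation of derived pushforwards and of the hypercohomology spectral sequence, the only bookkeeping being the degree conventions that place $\cK^{r}$ in degree $r \in \{-n, \dots, 0\}$ and thereby produce the indexing in (i).
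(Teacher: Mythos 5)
The paper offers no proof of this statement at all --- it is quoted as background, with a citation to \cite[Proposition 8.28]{Huybrechts} --- and your argument (the Koszul complex of the regular section of $p^*\cO(1)\otimes q^*T_{\PP^n}(-1)$ cutting out the diagonal, tensored with $p^*\cE$ and pushed forward along $q$, with flat base change identifying the $E_1$-terms) is precisely the standard proof given in that cited reference. Your proposal is correct and coincides with the source the paper relies on, so there is nothing to compare beyond noting this agreement.
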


The following theorem of Horrocks, which has already appeared in the introduction, is very well known. Since we will use it several times throughout this paper we recall the precise statement.

\begin{thm}[{Horrocks' criterion, \cite[Theorem 2.3.1]{Okonek}}]\label{Horrocks1}
Let $\cE$ be a vector bundle on $X$. Then $\cE$ is a direct sum of line bundles if and only if for all integers $k$ and all $1\leq j\leq n-1$ we have $h^{j}(\PP^{n},\cE(k))=0$.
\end{thm}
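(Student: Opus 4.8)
The forward implication is immediate: if $\cE\cong\bigoplus_i\cO(a_i)$ then for $1\le j\le n-1$ we have $h^j(\PP^n,\cE(k))=\sum_i h^j(\PP^n,\cO(a_i+k))=0$, because line bundles on $\PP^n$ have no intermediate cohomology. So the content is the converse, and the plan is to prove it by induction on $n$, with restriction to a hyperplane $H\cong\PP^{n-1}$ as the inductive engine. The base case $n=1$ is Grothendieck's splitting theorem on $\PP^1$ (where the hypothesis is vacuous, there being no $j$ with $1\le j\le 0$).

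For the inductive step I assume $n\ge2$ and the result on $\PP^{n-1}$. Writing $\cE_H=\cE|_H$ and tensoring $0\to\cO(-1)\to\cO\to\cO_H\to0$ with $\cE(k)$ gives $0\to\cE(k-1)\to\cE(k)\to\cE_H(k)\to0$. In the long exact sequence, for $1\le j\le n-2$ the group $H^j(H,\cE_H(k))$ is squeezed between $H^j(\PP^n,\cE(k))$ and $H^{j+1}(\PP^n,\cE(k-1))$, both of which are intermediate cohomology groups of $\cE$ and hence vanish by hypothesis. Thus $\cE_H$ has no intermediate cohomology on $\PP^{n-1}$, so by induction $\cE_H\cong\bigoplus_{i=1}^e\cO_H(a_i)$ with $e=\rk\cE$.

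The next step is to lift this splitting to $\PP^n$. Each summand inclusion $\cO_H(a_i)\hookrightarrow\cE_H$ is a global section of $\cE_H(-a_i)$, and I would lift it to a section of $\cE(-a_i)$. This is possible because the restriction map $H^0(\PP^n,\cE(-a_i))\to H^0(H,\cE_H(-a_i))$ is surjective: its cokernel injects into $H^1(\PP^n,\cE(-a_i-1))$, once more an intermediate cohomology group (this is where $n\ge2$ enters) and therefore zero. Collecting the lifts produces a morphism $\phi\colon\cF\to\cE$, where $\cF=\bigoplus_{i=1}^e\cO(a_i)$, whose restriction to $H$ is the isomorphism $\bigoplus_i\cO_H(a_i)\xrightarrow{\sim}\cE_H$.

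The remaining and, I expect, hardest step is to upgrade $\phi$ from an isomorphism along $H$ to a global isomorphism, since a priori it could drop rank away from $H$. I would argue via the determinant: $\det\phi$ is a section of $\det\cE\otimes(\det\cF)^{\vee}=\cO(d)$ with $d=\deg\cE-\sum_i a_i$, and it is nowhere vanishing on $H$ because $\phi|_H$ is an isomorphism. But $\cO_H(d)$ on $\PP^{n-1}$ (with $n-1\ge1$) carries a nowhere-zero section only if $d=0$; hence $d=0$ and $\det\phi$ is a nonzero constant, so $\phi$ is an isomorphism everywhere and $\cE\cong\bigoplus_i\cO(a_i)$. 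The points demanding care are the exactness and flatness bookkeeping in the two restriction sequences and the numerical argument pinning $\det\phi$ to a constant; everything else is the standard dévissage.
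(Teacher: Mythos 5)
Your argument is correct and is essentially the standard proof of Horrocks' criterion (induction on $n$ via hyperplane restriction, lifting the splitting using vanishing of $H^{1}(\PP^{n},\cE(-a_{i}-1))$, and the determinant trick to promote $\phi$ to a global isomorphism); the paper itself does not reprove this result but cites it from Okonek--Schneider--Spindler, whose proof follows the same route. No gaps.
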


At some point we will also need the following easy observation.

\begin{lem}\label{exty}

Consider an exact sequence of vector bundles
\[
0\to \cF\to\cF_{0}\to\dots\to\cF_{n}\to0.
\]
If for all $i>0$ and all $t=0,\dots,n$ we have $\textnormal{Ext}^{i}(\cF_{t},\cG)=0$ then for all $i>0$ we have $\textnormal{Ext}^{i}(\cF,\cG)=0.$
\end{lem}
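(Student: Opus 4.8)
The plan is a standard dimension-shifting argument. First I would break the long exact sequence into short exact sequences. Writing $\cC_{t}=\textnormal{im}(\cF_{t}\to\cF_{t+1})$ for $t=0,\dots,n-1$ (so that $\cC_{n-1}=\cF_{n}$, because the last map is surjective), exactness of the original complex gives the short exact sequences
\[
0\to\cF\to\cF_{0}\to\cC_{0}\to0
\]
together with
\[
0\to\cC_{t-1}\to\cF_{t}\to\cC_{t}\to0,\qquad t=1,\dots,n-1.
\]

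Next I would apply the contravariant long exact sequence of $\textnormal{Ext}^{\bullet}(-,\cG)$ to each of these and run a downward induction on $t$. For the last sequence $0\to\cC_{n-2}\to\cF_{n-1}\to\cF_{n}\to0$ the relevant portion reads
\[
\textnormal{Ext}^{i}(\cF_{n-1},\cG)\to\textnormal{Ext}^{i}(\cC_{n-2},\cG)\to\textnormal{Ext}^{i+1}(\cF_{n},\cG),
\]
and since by hypothesis both outer terms vanish for $i>0$ (note that $i+1>0$ as well), the middle term vanishes for $i>0$. Feeding this into the long exact sequence coming from $0\to\cC_{n-3}\to\cF_{n-2}\to\cC_{n-2}\to0$ gives, for $i>0$,
\[
\textnormal{Ext}^{i}(\cF_{n-2},\cG)\to\textnormal{Ext}^{i}(\cC_{n-3},\cG)\to\textnormal{Ext}^{i+1}(\cC_{n-2},\cG),
\]
where the left term vanishes by hypothesis and the right term vanishes by the previous step, so once more the middle term vanishes. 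Iterating, I obtain $\textnormal{Ext}^{i}(\cC_{t},\cG)=0$ for all $i>0$ and all $t$.

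Finally, applying the same long exact sequence to $0\to\cF\to\cF_{0}\to\cC_{0}\to0$ yields
\[
\textnormal{Ext}^{i}(\cF_{0},\cG)\to\textnormal{Ext}^{i}(\cF,\cG)\to\textnormal{Ext}^{i+1}(\cC_{0},\cG),
\]
whose two outer terms vanish for $i>0$ (the left by hypothesis, the right by the induction just completed), so $\textnormal{Ext}^{i}(\cF,\cG)=0$ for all $i>0$, as claimed. I do not expect any genuine obstacle here: the content is purely homological, and the only thing to keep track of is the bookkeeping of indices and the observation that the induction consumes an $\textnormal{Ext}^{i+1}$ term at each stage. This is precisely why the hypothesis must be imposed for \emph{all} $i>0$ simultaneously rather than in a single fixed degree.
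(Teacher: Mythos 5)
Your proof is correct and is essentially the same argument the paper has in mind: the paper's entire proof is ``By induction on $n$,'' and your dimension-shifting via the images $\cC_{t}$ and the contravariant long exact sequence of $\textnormal{Ext}(-,\cG)$ is exactly the standard way to carry out that induction. No gaps.
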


\begin{proof}
By induction on $n.$
\end{proof}

\section{Proof of Theorem \ref{Thm2}}

In this section we prove Theorem \ref{Thm2}. For clarity we split the proof into several lemmas. We start by investigating how property $(\dagger)$ affects the shape of Beilinson's spectral sequence.

\begin{lem}\label{DaggerLemma1}
Let $\cE$ be a vector bundle on $\PP^{n}$ with property $(\dagger).$ Then for every $1\leq s\leq n-1$, the following hold.

\begin{enumerate}
\item[a)]  If $(r,s)\notin\cH(\cE)$ then $E^{r,s}_{1}=0$ and therefore $E^{r,s}_{t}=0$ for all $t\geq1.$

\item[b)] The only possibly nonzero morphism from $E^{r,s}_{t}$ (where $t$ runs over all positive integers) is $E^{r,s}_{s+1}\to E^{r+s+1,0}_{s+1}$. If $r+s\geq0$ then there are no nonzero morphisms from $E^{r,s}_{t}$.

\item[c)] The only possibly nonzero morphism to $E^{r,s}_{t}$ (where $t$ runs over all positive integers) is $E^{r+s-n-1,n}_{n-s+1}\to E^{r,s}_{n-s+1}$. If $r+s\leq0$ then there are no nonzero morphims to $E^{r,s}_{t}$.

\item[d)] We have $E^{-s,s}_{\infty}=E^{-s,s}_{1}$.

\item[e)] For any $k\geq0$ and any $t\geq2$ we have an exact sequence
\begin{equation}\label{ses1}
0\to E_{1}^{-k-t,t-1}\to E_{t}^{-k,0}\to E_{t+1}^{-k,0}\to0
\end{equation}
\end{enumerate}
\end{lem}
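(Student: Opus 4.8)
The engine of the lemma is the interplay between two constraints on the first page $E^{r,s}_{1}=H^{s}(\PP^{n},\cE(r))\otimes\Omega^{-r}(-r)$: the cohomological constraint coming from $(\dagger)$, and a purely numerical one coming from Bott's formula (Theorem \ref{Bott}) together with the convention that $\Omega^{r}(r)=0$ for $r\notin[0,n]$. The latter forces $E^{r,s}_{1}=0$ unless $r\in[-n,0]$ and $0\leq s\leq n$; I will refer to this region as the bounding box. Throughout I use the standard differentials $d_{t}\colon E^{r,s}_{t}\to E^{r+t,s-t+1}_{t}$, which is the convention compatible with the maps named in the statement: indeed $d_{s+1}$ lands in $E^{r+s+1,0}_{s+1}$ and $d_{n-s+1}$ is received from $E^{r+s-n-1,n}_{n-s+1}$.

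Part a) is immediate, since $(r,s)\notin\cH(\cE)$ with $1\leq s\leq n-1$ means $h^{s}(\PP^{n},\cE(r))=0$, so $E^{r,s}_{1}=0$ and every later page is a subquotient of it. For b) the plan is to enumerate the possible targets of a differential leaving $E^{r,s}_{t}$ with $1\leq s\leq n-1$, classified by the second index $s-t+1$ of the target. If $s-t+1\in\{1,\dots,n-1\}$ then nonvanishing of the target would require $(r+t,s-t+1)\in\cH(\cE)$, which is exactly what $(\dagger)$ forbids once the source is nonzero (so that $(r,s)\in\cH(\cE)$). The value $s-t+1=n$ forces $t=s-n+1\leq0$ and is excluded, so the only surviving case is $s-t+1=0$, i.e.\ $t=s+1$, giving the single map $d_{s+1}\colon E^{r,s}_{s+1}\to E^{r+s+1,0}_{s+1}$; the bounding box then makes this target vanish unless $r+s+1\leq0$, which yields the last clause. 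Part c) is the mirror image: I would classify the possible sources of a differential entering $E^{r,s}_{t}$ by their second index $s+t-1$, exclude the middle case by the contrapositive of $(\dagger)$ (the target $(r,s)$ being nonzero forbids the source from lying in $\cH(\cE)$), note $s+t-1=0$ is impossible, and keep only $s+t-1=n$, giving $d_{n-s+1}$ from $E^{r+s-n-1,n}_{n-s+1}$; the bounding box then forces this source to vanish unless $r+s\geq1$.

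Part d) is then formal: on the antidiagonal entry $(-s,s)$ one has $r+s=0$, so by b) there is no differential out and by c) none in, on any page, whence $E^{-s,s}_{t+1}=E^{-s,s}_{t}$ for all $t$ and $E^{-s,s}_{\infty}=E^{-s,s}_{1}$.

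Part e) is the one genuine computation and the place where I expect the only subtlety. Fixing $k\geq0$ and $t\geq2$ and looking at the entry $(-k,0)$, its outgoing $d_{t}$ lands in $E^{-k+t,1-t}_{t}$, which vanishes since $1-t<0$; hence $E^{-k,0}_{t+1}=\coker\bigl(d_{t}\colon E^{-k-t,t-1}_{t}\to E^{-k,0}_{t}\bigr)$ and only the incoming map matters. It then remains to identify $E^{-k-t,t-1}_{t}$ with $E^{-k-t,t-1}_{1}$ and to prove this incoming $d_{t}$ injective. For the first point, the auxiliary entry $(-k-t,t-1)$ has by b) no outgoing differential before page $t$ (its only one being $d_{t}$ itself), while by c) its only possible incoming differential comes from $E^{-k-n-2,n}$, whose first index $-k-n-2<-n$ lies outside the bounding box; so this entry is untouched up to page $t$. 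For injectivity I would invoke convergence: since $(-k-t)+(t-1)=-k-1\neq0$, property ii) of Theorem \ref{Bei} gives $E^{-k-t,t-1}_{\infty}=0$, while after page $t$ the same bounding-box analysis shows no differential ever touches this entry again, so $E^{-k-t,t-1}_{\infty}=\ker(d_{t})$ and thus $\ker(d_{t})=0$. Assembling the two points gives the exact sequence (\ref{ses1}). The main obstacle is precisely this last step: injectivity is invisible on any single page and must be extracted from the global vanishing off the antidiagonal, so the argument must verify that no stray differential revives the entry $(-k-t,t-1)$ on the pages beyond $t$.
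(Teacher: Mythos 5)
Your proof is correct and follows essentially the same route as the paper: use $(\dagger)$ together with Bott's formula and the convention $\Omega^{r}(r)=0$ for $r\notin[0,n]$ to kill all differentials except those interacting with the rows $s=0$ and $s=n$, deduce d) formally, and for e) identify $E^{-k-t,t-1}_{t}$ with the first page and extract injectivity from $E^{-k-t,t-1}_{\infty}=0$ off the antidiagonal. The only cosmetic difference is that the paper explicitly reduces to $t\leq n$ in part e) before invoking b) and c), whereas you absorb that case into the bounding-box observation; both are fine.
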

\begin{figure}[h]\label{SpectralFigure}
    \centering
    \includegraphics{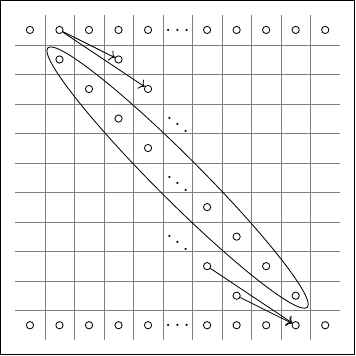}
    \caption{This graphic illustrates why Lemma \ref{DaggerLemma1} holds. Beilinson's spectral sequence is concentrated in the square $\{(r,s):-r,s\in\{0,1,\dots,n\}\}$. If $\cE$ has property ($\dagger$) then there are no nontrivial arrows between elements with $1\leq s\leq n-1$. In particular any entry above (resp. below) the diagonal admits only nonzero arrows from (resp. to) the elements in the top (resp. bottom) row. Circled elements on the diagonal have to satisfy $E^{-r,r}_{1}=E^{-r,r}_{\infty}$.}
    \label{fig:enter-label}
\end{figure}
\begin{proof}[Proof of Lemma \ref{DaggerLemma1}]
Since $E^{r,s}_{1}=H^{s}(\PP^{n},\cE(r))\otimes\Omega^{-r}(-r)$ we see that $E^{r,s}_{1}=0$ whenever $H^{s}(\PP^{n},\cE(r))=0$. Since $1\leq s\leq n-1$ we have $H^{s}(\PP^{n},\cE(r))=0$ if and only if $(r,s)\notin\cH(\cE).$ Thus $a)$ follows.
For the part $b)$ we may assume that $E^{r,s}_{1}$ is nonzero, in particular $(r,s)\in\cH(\cE).$ Then the condition $(\dagger)$ implies that $E^{r+t,s-t+1}_{t}=0$ as long as $s-t+1\neq0.$ Moreover if $r+s\geq0$ then $E^{r+s+1,0}_{1}=0$.
Proof of $c)$ is analogous to that of $b)$.
Part $d)$ follows from $b)$ and $c)$ because there are no nonzero morphisms from or to $E^{-s,s}_{t}$.
For the proof of $e)$ notice that The sequence
\[
E_{t}^{-k-t,t-1}\to E_{t}^{-k,0}\to E_{t+1}^{-k,0}\to0
\]
is always exact, so we only need to show that $E_{t}^{-k-t,t-1}=E_{1}^{-k-t,t-1}$ and that the left arrow is injective. We may assume that $t\leq n$ since otherwise $-k-t<-n$ and $E_{1}^{-k-t,t-1}=0$. Then since $-k-t+t-1=-k-1\leq0$ there is no nonzero map to $E_{m}^{-k-t,t-1}$ (where $m$ runs over all positive integers) by $c)$ and thus by $b)$ we have $E_{1}^{-k-t,t-1}=E_{t}^{-k-t,t-1}$. Injectivity is proven in a similar way. By $b)$ and $c)$ there is no nonzero morphisms to nor from $E_{m}^{-k-t,t-1}$ for $m\geq t+1$ and thus
\[
\ker(E_{t}^{-k-t,t-1}\to E_{t}^{-k,0})=E^{-k-t,t-1}_{\infty}=0,
\]
 where the second equality holds because if $k+t=t-1$ then $t=-1$ contrary to the initial assumption.
\end{proof}

The following lemma is crucial in the proof of Theorem \ref{Thm2}.

\begin{lem}\label{E00}
Let $\cE$ be a vector bundle which satisfies $(\dagger)$ and let $E^{0,0}_{\infty}$ be defined by Beilinson's sequence for $\cE.$ Then $\textnormal{Ext}^{i}(\Omega^{k}(k),E^{0,0}_{\infty})=0$ for all $k\geq0$ and $i>0.$
\end{lem}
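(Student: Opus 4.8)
The plan is to isolate the formal content of the statement. Call a bundle $\cG$ on $\PP^{n}$ \emph{acyclic} if $\textnormal{Ext}^{i}(\Omega^{k}(k),\cG)=0$ for all $i>0$ and all $k\ge0$; the lemma asserts precisely that $E^{0,0}_{\infty}$ is acyclic. Two facts drive everything. By Lemma~\ref{BeiLemma} every finite direct sum of bundles $\Omega^{j}(j)$ is acyclic, and by the long exact sequence of $\textnormal{Ext}^{\bullet}(\Omega^{k}(k),-)$ the acyclic bundles are closed under extensions and under passing to a quotient by an acyclic subbundle. The strategy is then to assemble $E^{0,0}_{\infty}$ out of the bundles $\Omega^{j}(j)$ using the exact sequences furnished by Lemma~\ref{DaggerLemma1}, in three steps.

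First I would dispose of the off-diagonal part of the top row. Fix $j\ge1$. Lemma~\ref{DaggerLemma1}(e), applied with $k=j$, gives short exact sequences $0\to E^{-j-t,t-1}_{1}\to E^{-j,0}_{t}\to E^{-j,0}_{t+1}\to0$ for $t\ge2$, whose kernel $E^{-j-t,t-1}_{1}=H^{t-1}(\PP^{n},\cE(-j-t))\otimes\Omega^{j+t}(j+t)$ is a direct sum of copies of $\Omega^{j+t}(j+t)$, hence acyclic. Since $(-j,0)$ lies off the diagonal, Theorem~\ref{Bei}(ii) gives $E^{-j,0}_{\infty}=0$, so this tower exhibits $E^{-j,0}_{2}$ as an iterated extension of acyclic bundles (build downward from $E^{-j,0}_{\infty}=0$). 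Closure under extensions then shows that $E^{-j,0}_{2}$ is acyclic for every $j\ge1$.

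The main step is to pass from the cohomology of the top row to the corner $E^{0,0}_{2}$, and this is where the only genuine difficulty lies: condition $(\dagger)$ constrains the middle cohomology but says nothing about the groups $H^{0}(\PP^{n},\cE(r))$, so the top-row differential $d_{1}$ and the subbundle $\textnormal{im}\bigl(d_{1}\colon E^{-1,0}_{1}\to E^{0,0}_{1}\bigr)$ are not visibly acyclic; this is exactly the transition that Lemma~\ref{DaggerLemma1}(e) does \emph{not} cover. To handle it I would view the top row $(E^{\bullet,0}_{1},d_{1})$, with terms $E^{-j,0}_{1}=H^{0}(\PP^{n},\cE(-j))\otimes\Omega^{j}(j)$ placed in degrees $-n,\dots,0$, as a complex $C^{\bullet}$. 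All of its terms are acyclic, and by the previous step all of its cohomology bundles $H^{-j}(C^{\bullet})=E^{-j,0}_{2}$ with $j\ge1$ are acyclic. Splitting $C^{\bullet}$ into the short exact sequences of cycles and boundaries and chasing the associated long exact sequences of $\textnormal{Ext}^{\bullet}(\Omega^{k}(k),-)$ — a bookkeeping argument in the spirit of Lemma~\ref{exty}, run covariantly — one propagates acyclicity through the complex: the boundary and cycle subbundles acquire no higher $\textnormal{Ext}$ against $\Omega^{k}(k)$, and the remaining cohomology $H^{0}(C^{\bullet})=E^{0,0}_{2}$ is forced to be acyclic as well.

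Finally I would climb from $E^{0,0}_{2}$ to $E^{0,0}_{\infty}$. Lemma~\ref{DaggerLemma1}(e) with $k=0$ gives $0\to E^{-t,t-1}_{1}\to E^{0,0}_{t}\to E^{0,0}_{t+1}\to0$ for $t\ge2$, with acyclic kernels $H^{t-1}(\PP^{n},\cE(-t))\otimes\Omega^{t}(t)$; since the tower stabilises at $E^{0,0}_{n+1}=E^{0,0}_{\infty}$ (as $\Omega^{t}(t)=0$ for $t>n$), quotienting an acyclic bundle by an acyclic subbundle upgrades the acyclicity of $E^{0,0}_{2}$ successively to $E^{0,0}_{3},\dots,E^{0,0}_{\infty}$, which is the assertion. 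The hard part is the middle step: steps one and three are formal consequences of Beilinson's vanishing (Lemma~\ref{BeiLemma}) together with the exact sequences of Lemma~\ref{DaggerLemma1}, whereas controlling the corner $E^{0,0}_{2}$ requires the whole top-row complex precisely because $(\dagger)$ gives no direct information about the differential $d_{1}$.
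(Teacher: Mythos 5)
Your proposal is correct and follows essentially the same route as the paper: the same three steps (acyclicity of $E^{-j,0}_{2}$ for $j\geq1$ via the tower from Lemma~\ref{DaggerLemma1}(e) and $E^{-j,0}_{\infty}=0$; then the corner $E^{0,0}_{2}$ via a reverse induction on the cycles and boundaries of the top-row complex; then climbing to $E^{0,0}_{\infty}$ with the $k=0$ sequences). The paper's Step~2 is exactly your cycle/boundary chase written out with $A_{k}=\ker\varphi_{k}$ and $B_{k}=\operatorname{im}\varphi_{k}$.
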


\begin{proof}
The proof takes some computations. For clarity we split it into several steps. For the purpose of this proof we will say that a vector bundle $\cF$ has property $\cP$ if $\textnormal{Ext}^{i}(\Omega^{k}(k),\cF)=0$ for all $k\geq0$ and $i>0.$ It follows from the construction of Beilinson's spectral sequence and from Lemma \ref{BeiLemma} that any element on the first page has $\cP$ and we will use this fact several times in the proof. \newline

\textit{Step 1: $E_{2}^{-k,0}$ has $\cP$ for $k\geq1$}. Since $E_{\infty}^{-k,0}=0$ we conclude that $E_{t}^{-k,0}=0$ for some $t\geq2$ and in particular it has $\cP.$ We now prove by reversed induction that if $E_{t}^{-k,0}$ has $\cP$ then so does $E_{t-1}^{-k,0}.$  It is clear that an extension of bundles with $\cP$ also has $\cP$. Therefore the claim follows from the part $e)$ of Lemma \ref{DaggerLemma1}.\newline

\textit{Step 2:  $E_{2}^{0,0}$ has $\cP$.}  Let $\varphi_{k}:E^{-k,0}_{1}\to E^{-k+1,0}_{1}$ denote maps on the first page of Beilinson's spectral sequence and denote $A_{k}=\ker \varphi_{k}$ and $B_{k}=\textnormal{im }\varphi_{k}.$ We have exact sequences 
\begin{equation}\label{S1}
0\to B_{k+1}\to A_{k}\to E_{2}^{-k,0}\to 0
\end{equation}
and
\begin{equation}\label{S2}
0\to A_{k}\to E_{1}^{-k,0}\to B_{k}\to 0
\end{equation}
 We prove by reverse induction on $k$ that both $A_{k}$ and $B_{k}$ have $\cP$ for $k\geq1.$ This is clear for $k>n.$ If $B_{k+1}$ has $\cP$ then so does $A_{k}$ by the exact sequence (\ref{S1}) and the first step. If in an exact sequence $0\to\cF'\to\cF\to\cF''\to0$ both $\cF$ and $\cF'$ have $\cP$ then so does $\cF''.$ Thus it follows from the exact sequence (\ref{S2}) that if $A_{k}$ has $\cP$ then so does $B_{k}.$ Now for $k=0$ we have a short exact sequence
\[
0\to B_{1}\to E^{0,0}_{1}\to E_{2}^{0,0}\to 0
\]
and thus $E^{0,0}_{2}$ has $\cP.$\newline

\textit{Step 3: $E^{0,0}_{\infty}$ has $\cP$.} For that it suffices to show that $E^{0,0}_{t}$ has $\cP$ for all $t\geq2.$ We prove this by induction on $t$. For $t=2$ this is already done. Then we use short exact sequences (\ref{ses1}) with $k=0$
\[
0\to E^{-t,t-1}_{1}\to E^{0,0}_{t}\to E_{t+1}^{0,0}\to0.
\]
Now $E^{0,0}_{t}$ has $\cP$ by the inductive assumption and $E^{-t,t-1}_{1}$ has $\cP$ because every element on the first page has $\cP$. We conclude that $E_{t+1}^{0,0}$ has $\cP$ and we are done.
\end{proof}

\begin{lem}\label{SplitFiltration}
Let $\cE$ be a vector bundle on $\PP^{n}$ with property $(\dagger)$. Assume moreover that $\cH(\cE)\subset\{r+s\leq0\}$. Then the filtration on $\cE$ induced by Beilinson's spectral sequence splits non-canonically, yielding a direct sum decomposition.
\begin{equation}\label{Direct3}
\cE=\bigoplus_{j=0}^{n}E^{-j,j}_{\infty}=E^{0,0}_{\infty}\oplus E^{-n,n}_{\infty}\oplus\bigoplus_{d=1}^{n-1}\Omega^{d}(d)^{\oplus a_{d}}
\end{equation}
where $a_{d}=h^{d}(\PP^{n},\cE(-d))$.
\end{lem}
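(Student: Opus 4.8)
The plan is to split the filtration $0=\cF^{1}\subset\cF^{0}\subset\dots\subset\cF^{-n}=\cE$ of Theorem \ref{Bei} by showing that enough extension groups between its graded pieces vanish. Write $G_{d}=E^{-d,d}_{\infty}$ for $d=0,\dots,n$, so that $\cF^{-d}/\cF^{-d+1}=G_{d}$, with $G_{0}=E^{0,0}_{\infty}$ at the bottom and $G_{n}=E^{-n,n}_{\infty}$ as the top quotient. The middle pieces are already pinned down by part d) of Lemma \ref{DaggerLemma1}: $G_{d}=E^{-d,d}_{1}=\Omega^{d}(d)^{\oplus a_{d}}$ with $a_{d}=h^{d}(\PP^{n},\cE(-d))$ for $1\leq d\leq n-1$. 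A standard d\'evissage (iterating the long exact sequences of $\textnormal{Ext}^{1}(G_{d},-)$ along the filtration of $\cF^{-d+1}$) shows that such a filtration splits non-canonically into the direct sum of its graded pieces as soon as $\textnormal{Ext}^{1}(G_{d},G_{e})=0$ for all $e<d$, so the whole problem reduces to this vanishing.

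For pairs of middle pieces the vanishing is immediate from Beilinson's Lemma \ref{BeiLemma}, since $\textnormal{Ext}^{1}(\Omega^{d}(d),\Omega^{e}(e))=0$. When the second argument is $G_{0}=E^{0,0}_{\infty}$ I use Lemma \ref{E00}, which says that $E^{0,0}_{\infty}$ enjoys the vanishing property $\cP$ that $\textnormal{Ext}^{i}(\Omega^{k}(k),\cF)=0$ for all $k\geq0$ and $i>0$; taking $k=d$ gives $\textnormal{Ext}^{1}(\Omega^{d}(d),E^{0,0}_{\infty})=0$ for $1\leq d\leq n-1$. Thus every pair $(d,e)$ with $d\leq n-1$ is under control, and it remains only to treat the top corner $G_{n}=E^{-n,n}_{\infty}$ as a first argument, i.e. to show $\textnormal{Ext}^{1}(E^{-n,n}_{\infty},\cG)=0$ for $\cG$ equal to $E^{0,0}_{\infty}$ or $\Omega^{e}(e)$.

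This top corner is the main obstacle, because a priori $E^{-n,n}_{\infty}$ is only a subbundle of $E^{-n,n}_{1}=\cO(-1)^{\oplus a_{n}}$ (the kernel of the first top-row differential) and there is no reason for it to be a direct sum of line bundles. Here the extra hypothesis $\cH(\cE)\subset\{r+s\leq0\}$ enters decisively. I first observe that every higher differential ($t\geq2$) leaving the top row $s=n$ vanishes: a differential out of $E^{r,n}_{t}$ lands on the anti-diagonal $\{r'+s'=r+n+1\}$, and since $r\geq-n$ one has $r'+s'\geq1$, so the middle entries on that anti-diagonal are off $\cH(\cE)$ and hence zero on the first page, while its bottom-row entry lies outside the range of the spectral sequence. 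No differentials enter the top row either, so $E^{r,n}_{\infty}=E^{r,n}_{2}$ equals the cohomology of the top-row complex $E^{\bullet,n}_{1}$, and convergence (Theorem \ref{Bei} ii) forces this cohomology to vanish for $r\neq-n$. Hence the top row is a resolution
\[
0\to E^{-n,n}_{\infty}\to E^{-n,n}_{1}\to E^{-n+1,n}_{1}\to\dots\to E^{0,n}_{1}\to0,
\]
whose terms $E^{r,n}_{1}=H^{n}(\PP^{n},\cE(r))\otimes\Omega^{-r}(-r)$ are direct sums of bundles $\Omega^{k}(k)$ with $0\leq k\leq n$.

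Now I can finish. For $\cG$ with property $\cP$ — which holds for $\cG=E^{0,0}_{\infty}$ by Lemma \ref{E00} and for $\cG=\Omega^{e}(e)$ by Lemma \ref{BeiLemma} — each term $E^{r,n}_{1}$ of the resolution satisfies $\textnormal{Ext}^{i}(E^{r,n}_{1},\cG)=0$ for all $i>0$, since $-r\in\{0,\dots,n\}$. Feeding the resolution into Lemma \ref{exty} then yields $\textnormal{Ext}^{i}(E^{-n,n}_{\infty},\cG)=0$ for all $i>0$, and in particular $\textnormal{Ext}^{1}(E^{-n,n}_{\infty},E^{0,0}_{\infty})=\textnormal{Ext}^{1}(E^{-n,n}_{\infty},\Omega^{e}(e))=0$. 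This settles the last family of pairs, so $\textnormal{Ext}^{1}(G_{d},G_{e})=0$ for every $e<d$, and the d\'evissage produces the non-canonical splitting (\ref{Direct3}) with $a_{d}=h^{d}(\PP^{n},\cE(-d))$. The one genuinely delicate point, as indicated, is the top corner: the hypothesis $\cH(\cE)\subset\{r+s\leq0\}$ is precisely what collapses the top row of Beilinson's sequence into a resolution of $E^{-n,n}_{\infty}$ by bundles to which Lemma \ref{exty} applies.
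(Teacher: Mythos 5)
Your proof is correct and follows essentially the same route as the paper: reduce the splitting to $\textnormal{Ext}^{1}(E^{-i,i}_{\infty},E^{-j,j}_{\infty})=0$ for $i>j$, handle $i\leq n-1$ via Lemma \ref{DaggerLemma1}~d) together with Lemmas \ref{BeiLemma} and \ref{E00}, and handle $i=n$ by showing the hypothesis $\cH(\cE)\subset\{r+s\leq0\}$ turns the top row into a resolution of $E^{-n,n}_{\infty}$ to which Lemma \ref{exty} applies. Your write-up is in fact slightly more explicit than the paper's about why the higher differentials out of the $s=n$ row vanish.
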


\begin{proof}
The second equality follows from Lemma \ref{DaggerLemma1} d) and the construction of Beilinson's spectral sequence. To prove the first equality we have to show that 
\begin{equation}\label{extvanish}
\begin{split}
\textnormal{Ext}^{1}(E^{-i,i}_{\infty},E^{-j,j}_{\infty})=0\textnormal{ }& \textnormal{ if }i>j
\end{split}
\end{equation}
Then from short exact sequences
\[
0\to\cF^{-i+1}\to\cF^{-i}\to E^{-i,i}_{\infty}\to0
\]
we can conclude by induction on $i$ that $\cF^{-i}=\bigoplus_{j=0}^{i}E^{-j,j}_{\infty}$ and in particular $\cE=\cF^{-n}=\bigoplus_{j=0}^{n}E^{-j,j}.$\newline

We now prove (\ref{extvanish}). For $1\leq i\leq n-1$ we have $E^{-i,i}_{\infty}=E^{-i,i}_{1}=\Omega^{i}(i)^{\oplus k_{i}}$ by Lemma \ref{DaggerLemma1} d). Then Lemma \ref{BeiLemma} together with Lemma \ref{E00} imply that the equality (\ref{extvanish}) holds if $0\leq j<i\leq n-1.$ To deal with $i=n$ note that because $\cH(\cE)\subset\{r+s\leq0\}$ there is no nonzero arrows from (and to) $E^{*,n}_{t}$ for $t\geq2$. In particular we have $E^{-n,n}_{\infty}=E^{-n,n}_{2}$ and $E^{-n+r,n}_{2}=0$ for $r>0$. Therefore $E_{\infty}^{-n,n}$ is fitted into a long exact sequence
\begin{equation}\label{resolution}
0\to E^{-n,n}_{\infty}\to E^{-n,n}_{1}\to E^{-n-1,n}_{1}\dots\to E^{0,n}_{1}\to0
\end{equation}
Since $E_{1}^{-k,n}=\Omega^{k}(k)^{\oplus},$ by Lemma \ref{BeiLemma} and Lemma \ref{E00} we have $\textnormal{Ext}^{i}(E_{1}^{-k,n},E^{-j,j}_{\infty})=0$ for $i>0$. Therefore we can use Lemma \ref{exty} and resolution (\ref{resolution}) to conclude that equality (\ref{extvanish}) holds for $i=n$.
\end{proof}
We now prove  Theorem \ref{Thm2}\newline
\begin{proof}[Proof of Theorem \ref{Thm2}]
Let us denote $\rho(\cE)=\#\cH(\cE)$. Note that this number is finite, because since $\cE$ is a vector bundle by Serre's vanishing theorem and Serre's duality for $|k|\gg0$ we have $h^{i}(\PP^{n},\cE(k))=0$ for $i\neq 0,n.$ The proof is by induction on $\rho(\cE)$.\newline

If $\rho(\cE)=0$ then by Horrocks' criterion (\ref{Horrocks1}) $\cE$ splits as a direct sum of line bundles and the decomposition (\ref{Direct2}) holds with $a_{r,s}=0$.\newline

Now we assume that $\rho(\cE)>0$. It is clear that $(r,s)\in\cH(\cE)$ if and only if $(r-k,s)\in\cH(\cE(k)).$ Therefore $\cE$ has property $(\dagger)$ if and only if $\cE(k)$ has property $(\dagger)$. It is also clear that if $\cE(k)$ has decomposes as in (\ref{Direct2}) then the twist of this decomposition by $-k$ is the desired decomposition for $\cE$. Since $\cH(\cE)$ is finite and nonempty, after twisting $\cE$ by some $k$ we may assume that $\cH(\cE)\subset\{r+s\leq0\}$ and that there exists $(-d,d)\in\cH(\cE)\cap\{r+s=0\}$. Then from Lemma \ref{SplitFiltration} we obtain a direct sum decomposition
\begin{equation}\label{E'}
\cE=\Omega^{d}(d)^{\oplus h^{d}(\PP^{n},\cE(-d))}\oplus\cE'
\end{equation}
with $\cE'=\bigoplus_{j\neq d}E^{-j,j}_{\infty}.$ Since $\cE'$ is a direct summand of $\cE$ we have $\cH(\cE')\subset\cH(\cE)$ and in particular $\cE'$ has property $(\dagger).$ Moreover $\rho(\cE')<\rho(\cE)$. Indeed, from Bott formula (\ref{Bott}) and (\ref{E'}) we have
\[
h^{d}(\PP^{n},\cE'(-d))=h^{d}(\PP^{n},\cE(-d))-h^{d}(\PP^{n},\cE(-d)).h^{d}(\PP^{n},\Omega^{d})=0
\]
and thus $(-d,d)\in \cH(\cE)\setminus\cH(\cE')$. Therefore we may apply inductive assumption to $\cE'$ and we are done.
\end{proof}

Theorem \ref{Thm2} allows to recover some classical splitting criteria. For example the following theorem of Horrocks (\cite[Expose 11]{Horrocks}) is a straightforward corollary of Theorem \ref{Thm2}. In fact deriving Theorem \ref{Horrocks2} from Beilinson's spectral sequence seems to be considered \textit{folklore} and is well known to the experts.

\begin{thm}[Horrocks]\label{Horrocks2}
Let $\cE$ be a vector bundle on $\PP^{n}$ and assume that for some $1\leq d \leq n-1$ we have
\begin{enumerate}

\item[a)] $h^{d}(\PP^{n},\cE)=r$.

\item[b)]  If $k\in\ZZ$ and $1\leq j\leq n-1$ then $h^{j}(\PP^{n},\cE(k))=0$, unless $j=d$ and $k=0$.

\end{enumerate}
Then there exist line bundles $\cL_{1},\dots,\cL_{s}$ such that
\[
\cE=(\Omega^{d})^{\oplus r}\oplus\cL_{1}\oplus\dots\oplus\cL_{s}
\]
\end{thm}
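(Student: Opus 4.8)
The plan is to derive Theorem \ref{Horrocks2} as a direct application of Theorem \ref{Thm2}, by verifying that the hypotheses (a) and (b) force the bundle $\cE$ to satisfy condition $(\dagger)$ and then reading off the decomposition from equation (\ref{Direct2}). First I would compute $\cH(\cE)$ explicitly. By hypothesis (b), the only pair $(k,j)$ with $1\leq j\leq n-1$ for which $h^{j}(\PP^{n},\cE(k))\neq0$ is $(k,j)=(0,d)$, so $\cH(\cE)=\{(0,d)\}$, a single point. Hypothesis (a) records that the corresponding cohomology has dimension $r$, i.e.\ $h^{d}(\PP^{n},\cE)=r$.

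Next I would check condition $(\dagger)$. Since $\cH(\cE)$ consists of the single element $(0,d)$, the only candidate to test is $(r',s')=(0,d)$: for $t>0$ we must verify that $(0+t,\,d-t+1)\notin\cH(\cE)$. But the only element of $\cH(\cE)$ is $(0,d)$, and $(t,d-t+1)=(0,d)$ would force $t=0$, contradicting $t>0$. Hence $(t,d-t+1)\notin\cH(\cE)$ for every $t>0$, so $(\dagger)$ holds trivially. Now I apply Theorem \ref{Thm2}, which gives a decomposition of the form (\ref{Direct2}) with multiplicities $a_{r',s'}=h^{s'}(\PP^{n},\cE(r'))$. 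Since $a_{r',s'}$ can be nonzero only when $(r',s')\in\cH(\cE)=\{(0,d)\}$, the only surviving twisted-cotangent summand is $\Omega^{d}(0)^{\oplus a_{0,d}}=(\Omega^{d})^{\oplus r}$, while the remaining summands $\bigoplus_{k}\cO_{\PP^{n}}(k)^{\oplus b_{k}}$ assemble into a direct sum of line bundles $\cL_{1}\oplus\dots\oplus\cL_{s}$. This yields exactly the claimed decomposition
\[
\cE=(\Omega^{d})^{\oplus r}\oplus\cL_{1}\oplus\dots\oplus\cL_{s}.
\]

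I do not expect any genuine obstacle here, as the theorem is a clean specialization. The only point demanding minor care is the bookkeeping of the twist: in Theorem \ref{Thm2} the cotangent summands appear as $\Omega^{s}(-r)$ indexed by $(r,s)\in\cH(\cE)$, so I should confirm that the single element $(0,d)$ contributes $\Omega^{d}(0)=\Omega^{d}$ with no shift, which is immediate since $r'=0$. The multiplicity identification $a_{0,d}=h^{d}(\PP^{n},\cE)=r$ is precisely hypothesis (a), so the exponent on $\Omega^{d}$ comes out correctly without further computation.
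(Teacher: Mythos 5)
Your proposal is correct and follows exactly the paper's route: the paper's proof is the one-line observation that $\#\cH(\cE)=1$ forces $(\dagger)$, followed by an application of Theorem \ref{Thm2}; you simply spell out the same verification (that $(t,d-t+1)=(0,d)$ would force $t=0$) and the same bookkeeping of multiplicities in more detail.
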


\begin{proof}
Under these assumptions we have $\#\cH(\cE)=1$ so $\cE$ has property $(\dagger)$ and we can apply Theorem \ref{Thm2}.
\end{proof}

\section{Proof of Theorem \ref{MainThm}}
In this section we prove Theorem \ref{MainThm}. The proof is a rather straightforward application of Theorem \ref{Thm2} together with the projection formula applied to the Frobenius morphism. Recall that since the $\Fm$ is affine we have
\begin{equation}\label{FrobeniusCohomology}
H^{j}(\PP^{n},(\Frobm\cE)(k))=H^{j}(\PP^{n},\Frobm(\cE(mk)))=H^{j}(\PP^{m},\cE(mk))
\end{equation}
We also recall that we denoted by $m(\cE)$ a positive integer such that for all $m\geq m(\cE)$ the twists $\cE(m)$ have only global section and the twists $\cE(-m)$ have only cohomology in degree $n$.

\begin{proof}[Proof of Theorem \ref{MainThm}]
First of all we note that if $m\geq m(\cE)$ then $(\Frobm\cE)(k)$ has cohomology only in degrees $0$ and $n$. This is clear from (\ref{FrobeniusCohomology}). This implies property $(\dagger)$ because $\cH(\Frobm\cE)$ is concentrated in a vertical line $\{(0,s):s\in\{1,\dots,n-1\}\}$. By Theorem \ref{Thm2} we obtain a decomposition
\begin{equation}\label{D1}
\Frobm\cE=\bigoplus_{\substack{1\leq s\leq n-1\\r\in\ZZ}}\Omega^{s}(-r)^{\oplus a_{r,s}}\oplus\bigoplus_{k\in\ZZ}\cO(k)^{\oplus b_{k}}.
\end{equation}
Moreover we must have $a_{r,s}=0$ for $r\neq0$ since otherwise $(\Frobm\cE)(r)$ would have nonzero cohomology in degree $s$. Thus decomopostion (\ref{D1}) may be rewritten as
\begin{equation}\label{D2}
\Frobm\cE=\bigoplus_{i=1}^{n-1}(\Omega^{i})^{\oplus a_{i}}\oplus\bigoplus_{k\in\ZZ}\cO(k)^{\oplus b_{k}},
\end{equation}
with $a_{i}=h^{i}(\PP^{n},\Frobm\cE)=h^{i}(\PP^{n},\cE).$ To obtain decomposition $(\ref{Direct1})$ we only need to show that $b_{k}=0$ for $k\notin\{0,-1,\dots,-n,-n-1\}$. We can then rewrite $\cO_{\PP^{n}}=\Omega^{0}$ and $\cO_{\PP^{n}}(-n-1)=\Omega^{n}$ and use classical formulas for cohomology of line bundles on $\PP^{n}$ to check that %
\[
b_{0}=h^{0}(\PP^{n},\Frobm\cE)=h^{0}(\PP^{n},\cE)
\]
and similarly $b_{-n-1}=h^{n}(\PP^{n},\cE)$. So assume that we have $b_{k}\neq 0$ for some $k>0$. Then $(\Frobm\cE)(-1)$ would have a nonzero global section and this together with (\ref{FrobeniusCohomology}) contradicts our choice of $m$. Similarly $b_{k}=0$ for $k<-n-1$ because $(\Frobm\cE)(1)$ has no cohomology in degree $n$.
\end{proof}

Given a vector bundle $\cE$ it is natural to ask what is the minimal positive integer $b=b(\cE)$ such that $\mathsf{F}_{b*}\cE$ decomposes as in (\ref{Direct1}) or (\ref{Direct2}). In general it seems to be difficult to express $b(\cE)$ in terms of known invariants of vector bundles. Below we discuss some examples.

\begin{ex}
Let $\cE$ be any vector bundle. Then by the projection formula $\cE$ is a direct summand of $\Frobm\mathsf{F}_{m}^{*}\cE$ and therefore if $\cE$ was not already of form (\ref{Direct1}) then $b(\mathsf{F}_{m}^{*}\cE)>m$. In particular $b(\cE)$ may be arbitrary large for fixed $\rk\cE\gg1$ and $n>1$.    
\end{ex}

\begin{ex}
In general $m(\cE)$ is not minimal among these $m$ for which $\Frobm(\cE)$ decomposes as in (\ref{Direct1}). let $n\geq4$ and $\cE=\Omega^{3}(3).$ Then $m(\cE)\geq4$ because $\cE(-3)$ has nontrivial $h^{3}$ by the Bott formula (\ref{Bott}). On the other hand it follows from the same formula and Horrocks' criterion (\ref{Horrocks1}) that $\mathsf{F}_{2*}\cE$ splits as a direct sum of line bundles. For example on $\PP^{5}$ we have
\[
\mathsf{F}_{2*}(\Omega^{3}(3))=\cO_{\PP^{5}}(-1)^{\oplus 84}\oplus\cO_{\PP^{5}}(-2)^{\oplus 216}\oplus\cO_{\PP^{5}}(-3)^{\oplus 20}
\]
\end{ex}

\section{Theorem of Klyachko}

We now present an application of our Theorem \ref{MainThm} to the theory of toric vector bundles. Let $\TT$ be the $n$-dimensional torus acting on $\PP^{n}$ in the natural way. After fixing coordinates in $\PP^{n}$ and $\TT$ we may assume that the action of $\TT$ is given by
\begin{equation}\label{T-action}
(t_{1},\dots,t_{n}).[x_{0}:x_{1}:\dots:x_{n}]=[x_{0}:t_{1}x_{1}:\dots:t_{n}x_{n}],
\end{equation}
The open orbit of this action is
\[
\cU=\TT.[1:1:\dots:1]=\textnormal{Spec }k[u_{1}^{\pm1},\dots,u_{n}^{\pm1}]
\]
A \textit{toric vector bundle} on $\PP^{n}$ is a vector bundle $\cE$ together with a fixed linearization for the action (\ref{T-action}) in the sense of \cite[$\S$3. Definition 1.6]{Mumford} (the definition given there for line bundles generalizes trivially to vector bundles of arbitrary rank).  Toric bundles on smooth complete toric varieties have been studied intensively by A.\ A.\ Klyachko (see for example \cite{Klyachko} for an exposition of his results). His main result states that the category of toric vector bundles (on a smooth complete toric variety) is equivalent to the category of vector spaces endowed with a family of filtrations and satisfying some extra conditions. Using this combinatorial description he was able to prove numerous interesting properties of toric vector bundles, for example the following vanishing theorem for cohomology of toric vector bundles on projective spaces.

\begin{thm}[{Klyachko, \cite[ Corollary 4.2]{Klyachko}}]\label{KlyachkoThm}
Let $\cE$ be a toric vector bundle on $\PP^{n}$. If $\rk\cE<{n \choose r}$ then $H^{r}(\PP^{n},\cE)=0$.
\end{thm}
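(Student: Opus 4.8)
The plan is to deduce Klyachko's vanishing theorem from the main structural result, Theorem \ref{MainThm}, by exploiting the fact that the Frobenius pushforward of a toric bundle is again a toric bundle, and that the decomposition provided by Theorem \ref{MainThm} is therefore forced to be compatible with the torus action. First I would take $\cE$ a toric vector bundle on $\PP^{n}$ and consider, for suitable $m$, the Frobenius pushforward $\Frobm\cE$. The key point is that $\Fm$ is an equivariant morphism: the torus action on $\PP^{n}$ commutes with raising coordinates to the $m$-th power (indeed the action in (\ref{T-action}) just rescales the coordinates, so $\Fm\circ(t.-)$ and $(t^{m}.-)\circ\Fm$ agree), so $\Frobm\cE$ inherits a canonical $\TT$-linearization and is again a toric bundle. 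Applying Theorem \ref{MainThm} gives a decomposition
\[
\Frobm\cE=\bigoplus_{i=0}^{n}(\Omega^{i})^{\oplus a_{i}}\oplus\bigoplus_{j=1}^{n}\cO(-j)^{\oplus b_{j,m}},\qquad a_{i}=h^{i}(\PP^{n},\cE).
\]

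Next I would count ranks on the toric side. Since $\cH^{r}(\PP^{n},\cE)$ appears as the multiplicity $a_{r}$ of $\Omega^{r}$, and since $\rk\Omega^{r}={n\choose r}$ while the line bundle summands contribute $\rk\cO(-j)=1$, taking ranks in the displayed decomposition yields
\[
\rk\Frobm\cE=\sum_{i=0}^{n}{n\choose i}\,h^{i}(\PP^{n},\cE)+\sum_{j=1}^{n}b_{j,m}.
\]
On the other hand $\rk\Frobm\cE=m^{n}\rk\cE$ since $\Fm$ has degree $m^{n}$. Because all multiplicities are nonnegative, this forces the inequality $\binom{n}{r}h^{r}(\PP^{n},\cE)\le m^{n}\rk\cE$ for every $r$, which unfortunately grows with $m$ and so does not by itself give the sharp bound. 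The genuinely useful observation is therefore not a crude rank count but the following: if $H^{r}(\PP^{n},\cE)\neq0$ then the summand $(\Omega^{r})^{\oplus a_{r}}$ with $a_{r}\geq1$ is a nonzero $\TT$-equivariant direct summand of the toric bundle $\Frobm\cE$, and hence $\Omega^{r}$ itself — being an \emph{indecomposable} toric bundle of rank $\binom{n}{r}$ — must appear as an equivariant subsheaf of $\Frobm\cE$.

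The hard part, and the step around which the whole argument turns, is to extract from this an intrinsic lower bound on $\rk\cE$ independent of $m$. The mechanism I would use is a fixed-point (weight) argument at the distinguished $\TT$-fixed points of $\PP^{n}$: at a fixed point $p$, the fiber of a toric bundle is a $\TT$-representation, i.e. a sum of characters, and for the cotangent summand $\Omega^{r}$ the $\binom{n}{r}$ characters appearing in the fiber $(\Omega^{r})_{p}$ are distinct (they are $r$-fold products of the distinct cotangent weights at $p$). Under $\Fm$, the fiber of $\Frobm\cE$ at a fixed point decomposes according to the cosets of $\mu_m^n$, and the characters of $\cE$ at the preimages of $p$ distribute among these; the multiplicity with which any single character can appear in $(\Frobm\cE)_{p}$ is bounded by $\rk\cE$. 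Since $\Omega^{r}\hookrightarrow\Frobm\cE$ as toric bundles forces each of the $\binom{n}{r}$ distinct characters of $(\Omega^{r})_{p}$ to appear in $(\Frobm\cE)_{p}$, but the constraint is really about \emph{linear independence} of these characters inside the weight space at $p$, one obtains $\binom{n}{r}\leq\dim(\Frobm\cE)_{p}/(\text{effective weight bound})$, and the bookkeeping is engineered precisely so that the $m$-dependence cancels, leaving $\binom{n}{r}\leq\rk\cE$ as the contrapositive of the desired statement. I expect the main obstacle to be making this weight-multiplicity count rigorous — in particular verifying that the equivariant embedding $\Omega^{r}\hookrightarrow\Frobm\cE$ forces $\binom{n}{r}$ \emph{linearly independent} weights to be accounted for by at most $\rk\cE$ weights of $\cE$ at each fixed point, which is exactly where Lemma \ref{ToricLemma} (for which the author thanks P.\ Achinger) should do the decisive work.
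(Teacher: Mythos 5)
Your proposal correctly sets up the first half of the argument (apply Theorem \ref{MainThm} to $\Frobm\cE$ for suitable $m$, and observe that $h^{r}(\PP^{n},\cE)$ is the multiplicity $a_{r}$ of $\Omega^{r}$), and you correctly sense that a crude rank count is useless because $\rk\Frobm\cE=m^{n}\rk\cE$ grows with $m$. But the second half --- the fixed-point weight-multiplicity argument --- is not a proof: the ``effective weight bound'' is never defined, and the claim that ``the bookkeeping is engineered precisely so that the $m$-dependence cancels'' is exactly the statement you need to prove, asserted rather than established. Knowing that the $\binom{n}{r}$ characters of $(\Omega^{r})_{p}$ are distinct and must appear in $(\Frobm\cE)_{p}$ gives nothing, since $(\Frobm\cE)_{p}$ has dimension $m^{n}\rk\cE$ and can accommodate arbitrarily many distinct characters. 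As written, the argument has a genuine gap at its central step.

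The idea you are missing is what Lemma \ref{ToricLemma} actually provides and how it is combined with the Krull--Schmidt property. For $m$ coprime to the characteristic, $\Fm$ is the quotient by $G=\mu_{m}^{n}\subset\TT$; since $G$ acts trivially on the target, it acts $\cO$-linearly on $\Frobm\cE$, and diagonalizing this action splits $\Frobm\cE$ into $m^{n}$ character eigensheaves $\cE_{\chi}$, each of rank exactly $\rk\cE$ (checked on the open orbit, where everything trivializes as in (\ref{Laurent})). Now invoke Atiyah's theorem that the decomposition into indecomposables is unique: the decomposition of Theorem \ref{MainThm} into copies of $\Omega^{i}$ and line bundles (all indecomposable) must refine $\bigoplus_{\chi}\cE_{\chi}$, so every indecomposable summand has rank at most $\rk\cE_{\chi}=\rk\cE$. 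Since $\Omega^{r}$ is indecomposable of rank $\binom{n}{r}>\rk\cE$, it cannot occur, hence $a_{r}=h^{r}(\PP^{n},\cE)=0$. No weight computation at fixed points is needed; the $m$-independence comes for free because each $\cE_{\chi}$ has rank $\rk\cE$ regardless of $m$.
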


Since Klyachko's formalism is very combinatorial it may be interesting to find a more direct proof of Theorem \ref{KlyachkoThm}. We note that a short proof of this theorem in case $\rk\cE=2$ (which is particularly interesting due to its connection to Hartshorne's conjecture) has recently been found by D.\ Stapleton in \cite{Stapleton}. Below we present a short noncombinatorial proof valid for arbitrary rank of $\cE$. We first prove Lemma \ref{ToricLemma} below, which states that if $\cE$ is toric then $\Frobm\cE$ admits a natural direct sum decomposition and then we compare this decomposition with decomposition (\ref{Direct1}) to conclude Theorem (\ref{KlyachkoThm}).

\begin{lem}\label{ToricLemma}
Let $\cE$ be a toric vector bundle on $\PP^{n}$ and let $m$ be a positive integer coprime to $\textnormal{char }k$. Then $\Frobm\cE=\bigoplus_{i=1}^{m^{n}}\cE_{i}$, where $\cE_{i}$ are vector bundles and $\rk\cE=\rk\cE_{i}$.
\end{lem}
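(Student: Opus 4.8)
The plan is to exploit the description, recalled in the introduction, of $\Fm$ as the quotient of $\PP^{n}$ by the diagonal action of the finite abelian group $G:=\mu_{m}^{n}\subset\TT$; since $m$ is coprime to $\textnormal{char }k$, this group has order $m^{n}$ and is \emph{linearly reductive}. The morphism $\Fm$ is $G$-invariant, i.e.\ $\Fm\circ g=\Fm$ for every $g\in G$, and the fixed toric linearization of $\cE$ restricts to a $G$-linearization, that is, a family of isomorphisms $\lambda_{g}\colon g^{*}\cE\xrightarrow{\sim}\cE$ satisfying the cocycle condition. First I would use these data to equip $\Frobm\cE$ with a $G$-action: pushing forward $\lambda_{g}$ and using $\Fm\circ g=\Fm$ yields an automorphism of $\Frobm\cE$, and the cocycle condition guarantees that $g\mapsto\Frobm(\lambda_{g})$ defines an action of $G$ on $\Frobm\cE$ by $\cO_{\PP^{n}}$-linear automorphisms (the $\cO_{\PP^{n}}$-linearity being precisely the $G$-invariance of $\Fm$).

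Next, since $G$ is linearly reductive, its $m^{n}$ characters $\chi\in\widehat{G}$ give an isotypic decomposition
\[
\Frobm\cE=\bigoplus_{\chi\in\widehat{G}}(\Frobm\cE)_{\chi}
\]
into $\cO_{\PP^{n}}$-coherent subsheaves. Each summand $\cE_{\chi}:=(\Frobm\cE)_{\chi}$ is a direct summand of the vector bundle $\Frobm\cE$ (here we use that $\Fm$ is finite flat and $\cE$ locally free, so $\Frobm\cE$ is a vector bundle), hence is itself a vector bundle on $\PP^{n}$. As $|\widehat{G}|=m^{n}$, this already produces the desired decomposition into $m^{n}$ bundles, and it remains only to compute their ranks.

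The ranks I would compute on the open orbit $\cU\cong\TT$, where the main point lies. Because $\TT$ acts freely and transitively on $\cU$, the toric bundle $\cE|_{\cU}$ is equivariantly trivial, so that $\Frobm(\cE|_{\cU})\cong\Frobm\cO_{\cU}\otimes_{k}\cE_{e}$ with $G$ acting only through the first factor; here $\cE_{e}$ is the fibre at the identity. The restriction of $\Fm$ to $\cU$ is the $m$-th power isogeny $\TT\to\TT$, whose pushforward $\Frobm\cO_{\cU}$ is free of rank $m^{n}$ with the monomial basis $\{u^{a}:0\le a_{i}<m\}$; these monomials are eigenvectors realising each of the $m^{n}$ characters of $G$ exactly once, so each isotypic piece $(\Frobm\cO_{\cU})_{\chi}$ has rank $1$. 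Consequently $(\cE_{\chi})|_{\cU}$ has rank $\dim_{k}\cE_{e}=\rk\cE$, and since $\PP^{n}$ is connected and $\cE_{\chi}$ is locally free, its rank is constant and equal to $\rk\cE$, as claimed. The main obstacle is exactly this last rank bookkeeping — recognising the free-action geometry over $\cU$ and verifying that each character of $G$ occurs with multiplicity precisely $\rk\cE$; by contrast, setting up the $G$-action in the first step is routine equivariant descent, and it is linear reductivity (which fails without the coprimality hypothesis) that makes the isotypic splitting a genuine direct sum decomposition.
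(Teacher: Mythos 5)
Your argument is correct and follows essentially the same route as the paper's proof: decompose $\Frobm\cE$ into isotypic pieces for the $\mu_m^n$-action (made possible by coprimality to the characteristic), then compute ranks by restricting to the open torus orbit, where the equivariant trivialization of $\cE$ reduces everything to the monomial eigenbasis of $k[u_1^{\pm1},\dots,u_n^{\pm1}]$ over $k[u_1^{\pm m},\dots,u_n^{\pm m}]$. Your explicit setup of the $G$-linearization and the observation that each $\cE_\chi$ is a direct summand of a vector bundle (hence locally free of constant rank on connected $\PP^n$) are fine and match the paper's intent.
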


\begin{proof}
For simplicity we will write $\mathsf{F}=\mathsf{F}_{m}$. Let $G=\mu_{m}^{n}\subset\TT$ be the subgroup of $\TT$ defined as
\[
G=\{(t_{1},\dots,t_{n})\in\TT:t_{i}^{m}=1\textnormal{ for all }i\}
\]
Since $m$ is coprime to $\textnormal{char }k$, we have an isomorphism
\begin{equation}\label{Characters}
\textnormal{Hom}(G,k^{\times})=(\ZZ/m\ZZ)^{\oplus n}
\end{equation}
which is given by
\[
(r_{1},\dots,r_{n})\mapsto[(t_{1},\dots,t_{n})\mapsto \prod_{i=1}^{n}t_{i}^{r_{i}}]
\]
It is clear that $\mathsf{F}$ coincides with the quotient of $\PP^{n}$ by the action of $G$. We leave it to the reader to verify that this is a \textit{geometric quotient}, i.e., that $\mathsf{F}$ is open, $G$ acts transitively on the fibers of $\mathsf{F}$, and that $(\Frob\cO_{\PP^{n}})^{G}=\cO_{\PP^{n}}$. Now, if $\cE$ is a toric vector bundle then it is in particular equivariant with respect to the $G$-action and therefore $\Frob\cE$ is naturally a sheaf of $\cO_{\PP^{n}}[G]$-modules. Since $m$ is coprime to the characteristic of $k=\overline{k}$ the action of $G$ is diagonalizable and we obtain a decomposition 
\[
\Frob\cE=\bigoplus_{\chi\in\textnormal{Hom}(G,k^{\times})}\cE_{\chi},
\]
where for any open subset $U\subset\PP^{n}$ we have
\[
\cE_{\chi}(U)=\{m\in\cE(U):g.m=\chi(g)m\textnormal{ for all }g\in G\}.
\]
We want to show that $\rk\cE=\rk\cE_{\chi}$. For that it suffices to compare ranks on some nonempty open subset of $\PP^{n}$, for example on the open orbit $\cU$. If $\cE$ is a toric bundle on $\PP^{n}$ then $\cE_{|\cU}=\cO_{\cU}^{\oplus\rk\cE}$ as toric bundles, where the toric structure on $\cO_{\cU}$ is the one induced by the left action of $\TT$ on itself, i.e., is given by
\begin{equation}\label{G-action}
(t_{1},\dots,t_{n}).u_{1}^{a_{1}}\dots u_{n}^{a_{n}}=t_{1}^{a_{1}}\dots t_{n}^{a_{n}}u_{1}^{a_{1}}\dots u_{n}^{a_{n}}
\end{equation}
Clearly we have $\mathsf{F}^{-1}\cU=\cU$ and thus
\[
(\Frob\cE)_{|\cU}=(\mathsf{F}_{|\cU})_{*}\cE_{|\cU}=(\mathsf{F}_{|\cU})_{*}\cO_{\cU}^{\oplus\rk\cE}.
\]
Therefore we only need to check that 
\begin{equation}\label{StrucureSheaf}
\Frob\cO_{\cU}=\bigoplus_{\chi\in\textnormal{Hom}(G,k^{\times})}\cO_{\chi}
\end{equation}
is a direct sum decomposition into line bundles, because then
\[
\cE_{\chi|\cU}=\cO_{\chi}^{\oplus\rk\cE}.
\]
To see that $\cO_{\chi}$ is a line bundle note that the $\cO_{\cU}$-module structure on $\Frob_{|\cU}\cO_{\cU}$ corresponds to the decomposition of $k[u_{1}^{\pm1},\dots,u_{n}^{\pm1}]$ as a $k[u_{1}^{\pm m},\dots, u_{n}^{\pm m}]$-module, which is given by
\begin{equation}\label{Laurent}
k[u_{1}^{\pm1},\dots,u_{n}^{\pm1}]=\bigoplus_{0\leq r_{i}\leq m-1}u_{1}^{r_{1}}\dots u_{n}^{r_{n}}k[u_{1}^{\pm m},\dots, u_{n}^{\pm m}]
\end{equation}
Comparing this with (\ref{Characters}) and (\ref{G-action}) we see, that if $\chi$ is a character corresponding to $(r_{1},\dots,r_{n})$ then
\[
\cO_{\chi}=u_{1}^{r_{1}}\dots u_{n}^{r_{n}}k[u_{1}^{\pm m},\dots, u_{n}^{\pm m}].
\]
Therefore decompositions (\ref{StrucureSheaf}) and (\ref{Laurent}) coincide and we are done.
\end{proof}

\begin{rmk}
If $m$ is not coprime to $\textnormal{char }k$ then Lemma \ref{ToricLemma} is still valid. If $\textnormal{char }k=p$ and $m=p^{r}$ then it is consequence of a more general theorem of R.\ B{\o}gvad \cite[Theorem 1]{Bogvad} and in this case components $\cE_{i}$ are again toric. In general $m=p^{r}m_{1}$, where $m_{1}$ is comprime to $p$ and we can factor $\mathsf{F}_{m}=\mathsf{F}_{m_{1}}\mathsf{F}_{p^{r}}$.
\end{rmk}

We now give a proof of Theorem \ref{KlyachkoThm}.

\begin{proof}[Proof of Theorem \ref{KlyachkoThm}]
Let $\cE$ be a toric vector bundle on $\PP^{n}$ of rank $\rk\cE<{n \choose r}$. We can find $m$ that is coprime to $\textnormal{char }k$ and is large enough so that decomposition (\ref{Direct1}) exists. Comparing this decomposition with the decomposition from Lemma \ref{ToricLemma} we obtain equality
\[
\Frobm\cE=\bigoplus_{i=0}^{n}(\Omega^{i})^{\oplus a_{i}}\oplus\bigoplus_{j=1}^{n}\cO(-j)^{\oplus b_{j,m}}=\bigoplus_{i=1}^{m^{n}}\cE_{i}
\]
with $\rk\cE_{i}<{n \choose r}.$ It is well known (and clear from Theorem \ref{Horrocks2}) that $\Omega^{i}$ are indecomposable vector bundles. An old theorem of M.\ Atiyah \cite[Theorem 3]{Atiyah} implies that every vector bundle on $\PP^{n}$ admits a unique decomposition into a direct sum of indecomposable vector bundles (the so called \textit{Remak decomposition}). Thus the decomposition on the left hand side must refine the decomposition on the right hand side and in particular every indecomposable summand on the left hand side has rank $<{n \choose r}$. Since $\rk\Omega^{r}={n \choose r}$ we obtain $h^{r}(\PP^{n},\cE)=a_{r}=0$ and we are done.

\end{proof}

%BIBLIOGRAFIA%

\bibliographystyle{plain}
\bibliography{BibliographyFrobenius}

\end{document}